\newtheorem {theorem} {Theorem} [section]
\newtheorem {lemma} {Lemma} [section]
\newtheorem {prop} {Proposition} [section]
\newtheorem{preremark}{Remark}[section]
\newenvironment{remark}%
  {\begin{preremark}\rm}{\end{preremark}}
   \newtheorem{preremark1}{Example}[section]
     \newtheorem{preremark2}{Definition}[section]
\newenvironment{defn}
  {\begin{preremark2}\rm}{\end{preremark2}}
\DeclareMathOperator{\divo}{div}
\DeclareMathOperator{\Divo}{Div}
\begin{document}

\renewcommand{\theequation}{\arabic{section}.\arabic{equation}}

\numberwithin{equation}{section}

\title[Maxwell-alpha fluid]{Global generalized solutions for Maxwell-alpha and Euler-alpha equations}

\author[D.Vorotnikov]{Dmitry Vorotnikov}

\dedicatory{CMUC, Department of Mathematics, University of Coimbra\\ 3001-454 Coimbra, Portugal\\ mitvorot@mat.uc.pt}

\thanks{The research was partially supported by CMUC/FCT}

\keywords{viscoelasticity, global solvability, Maxwell fluid, Euler-alpha model, Lagrangian averaging, dissipative solution}
\subjclass[2010]{35D99; 35Q35; 76B03; 76A05; 76A10}

%\infonum{10}{03}

%\date{January 22, 2010}

\begin{abstract} We study initial-boundary value problems for the Lagrangian averaged alpha models for the equations of motion for the corotational Maxwell and inviscid fluids in 2D and 3D. We show existence of (global in time) dissipative solutions to these problems. We also discuss the idea of dissipative solution in an abstract Hilbert space framework. 
\end{abstract}

\maketitle

\section {Introduction}

\newcommand{\sgn}{\text{sgn}}
\newcommand {\R} {\mathbb{R}}
\newcommand {\E} {\mathbf{E}}
\def\be{\begin{equation}}
\def\ee{\end{equation}}
\def\fr#1#2{\frac{\partial #1}{\partial #2}}

Recently, so-called alpha-models of fluid mechanics have attracted attention of many researchers, see e.g. \cite{cpam,busu,many, fc, fht, hkmr, kapr,hmr2,hou,shiz,mrs,sk} and references therein. These models turned out to be rather ubiquitous, being relevant in such issues as turbulence and large eddy simulations, and, on the other hand, being related to the second grade fluids. 

In this work, we study the initial boundary value problem for the corotational Maxwell-alpha viscoelastic fluid flow:
\begin{equation} \label{eq1}\frac {\partial v} {\partial t} + \sum\limits _ {i=1} ^ {n} u_i\frac {\partial
v} {\partial x_i} + \sum\limits _ {i=1} ^ {n}
 v_i\nabla u_i +\nabla p =\Divo \sigma, \end{equation}
\begin{equation}\label{eq2}\sigma + \lambda \left(\frac {\partial \sigma} {\partial t} + \sum\limits _ {i=1}
^ {n} u_i\frac {\partial \sigma} {\partial x_i}+ \sigma W - W\sigma\right) = 2\eta \mathcal
{E}, \end{equation}
\be \label{eq3}v=u-\alpha^2 \Delta u,\ee
\begin{equation} \label{eq4}\divo u =
0,\end{equation}
\begin{equation} \label{eq5}u \Big | _ {\partial\Omega} =0,\end{equation}
\begin{equation} \label{eq6}u | _ {t=0} =a, \ \sigma | _ {t=0} = \sigma_0.\end{equation}

Here, $ \Omega $ is a domain in the space $ \mathbb {R}
^n $, $n=2,3 $, $u $ is an unknown velocity vector, $p $ is an unknown
modified pressure, $ \sigma $ is an unknown deviatoric stress tensor, $v$ is an auxiliary variable (all of them
depend on points $x $ in the domain $ \Omega$, and on 
time $t $); $$ \mathcal {E} = \mathcal {E} (u) = 
(\mathcal {E} _ {ij} (u)),\ \mathcal {E} _ {ij} (u) = \frac {1}
{2} \left(\frac {\partial u_i} {\partial x_j} + \frac {\partial u_j}
{\partial x_i}\right),$$ is the strain velocity tensor, $$W = W (u)=
(W _ {ij} (u)),\ W _ {ij}  (u) = \frac {1}
{2} \left(\frac {\partial u_i} {\partial x_j} - \frac {\partial u_j}
{\partial x_i}\right),$$ is the vorticity tensor, $ \eta> 0 $ is the Maxwellian
viscosity of the fluid, $ \lambda>0 $ is the relaxation time, $\alpha>0$ is a scalar parameter, $a$
and $\sigma_0$ are given functions. The external force is, for simplicity, assumed to be zero.

Equation \eqref{eq1} is the Lagrangian averaged (Euler-alpha-like) Cauchy's equation of motion, \eqref{eq2}
is the corotational Maxwell constitutive law \cite{lar}, \eqref{eq4} is the
equation of continuity, and \eqref{eq5} is the no-slip
condition. 
When $\alpha=0$, \eqref{eq1} -- \eqref{eq6} becomes the initial-boundary value problem for the equations of motion for the corotational Maxwell viscoelastic fluid \cite{gs2,lar,rheo,book}.  Thus, \eqref{eq1} -- \eqref{eq6} can be considered as an appropriate $\alpha$-model for the corotational Maxwell fluid. 

In the particular case $\eta=0$ and $\sigma_0=0$, we recover the Dirichlet problem for the celebrated Euler-$\alpha$ model: 

\begin{equation} \label{eq1e}\frac {\partial v} {\partial t} + \sum\limits _ {i=1} ^ {n} u_i\frac {\partial
v} {\partial x_i} + \sum\limits _ {i=1} ^ {n}
 v_i\nabla u_i +\nabla p =0, \end{equation}
\be \label{eq3e}v=u-\alpha^2 \Delta u,\ee
\begin{equation} \label{eq4e}\divo u =
0,\end{equation}
\begin{equation} \label{eq5e}u \Big | _ {\partial\Omega} =0,\end{equation}
\begin{equation} \label{eq6e}u | _ {t=0} =a.\end{equation}

The Euler-alpha (also known as Lagrangian averaged Euler and inviscid Camassa-Holm) equations were introduced and derived in \cite{kapr,hmr2}.  They are well-posed on small time intervals \cite{hkmr,sk,mrs}. However, neither strong nor weak general global solvability result has been known for the three-dimensional domains. A sort of a Beale-Kato-Majda criterion for these equations was proposed in \cite{hou}. Some reviews of mathematical results on the Euler-$\alpha$ model and many references may be found in \cite{cpam,busu,fc,shiz}.

The Maxwell model is one of the basic and classical models of a viscoelastic material. Its mechanical analogy is comprised of a spring and a dashpot connected in series \cite{rheo}. The multidimensional Maxwell models generate complicated systems of PDEs due the frame-indifference restrictions and consecutive involvement of objective derivatives \cite{lar,book}. The simplest objective derivative  is the corotational (Jaumann) one, in the incompressible case its use yields \eqref{eq1}--\eqref{eq4} with $\alpha=0$.

Very few mathematical results are known for the corotational Maxwell fluid equations (see \cite{gs2,ren,renb}). In particular, there is no global solvability theorem, even in 2D. Moreover, there are evidences of non-existence of smooth solutions \cite{renb,gs2}. 
Let us also mention paper \cite{numeri} with some numerical issues regarding the corotational Maxwell fluid.

In these circumstances, if we want the problem to be solvable globally, a possible way out is to consider a kind of a generalized solution different from the standard hydrodynamical weak solution framework. We are going to use the concept of \emph{dissipative solution} due to P.-L. Lions. It was suggested in \cite{blions} for the Euler equations of ideal fluid flow, which have only been proven to be globally weakly solvable on the torus (this is a very recent result \cite{wied1}). Some properties of dissipative solutions are discussed in \cite{tit,booksr}. Later, existence of such solutions was established for Boltzmann's  equation \cite{lio1} (see also \cite{vilg}), and for various models arising in magnetohydrodynamics \cite{anwu,sra}, diffusion in polymers \cite{diss1}, and image restoration \cite{v14}.  

The notion of dissipative solution plays a key role in the problem of transition from kinetic theory to hydrodynamics. In the Euler hydrodynamic limit, the renormalized solutions of Boltzmann's equation tend to dissipative solutions of Euler's one \cite{bren,booksr,vil}. Other issues concerning relevance of dissipative solutions for the Euler equations and relation of this concept to the weak and measure-valued solutions may be encountered in \cite{bls,lel}. 

In math literature, the expression ``dissipative solution" has various meanings. In particular, the notion that we use differs from the ones from \cite{bres,duc,phs,port1,portc}.

The objective of our paper is to introduce dissipative solutions for the corotational Maxwell-alpha problem \eqref{eq1} -- \eqref{eq6}, and to show their existence and basic properties. These solutions are always global in time. In the appendix to the paper, we present the skeleton of the idea of dissipative solution via considering it in an abstract Hilbert space setting. 

Throughout the paper, for definiteness, we assume $\eta>0$. However, the results remain valid in the Euler-alpha case, and the proofs are similar but simplified (see also Remark \ref{eac}).

Our approach is not directly applicable to the corotational Maxwell fluid ($\alpha=0$), but it allows us to construct some ``ultra-generalized" solutions for that model (see Remark \ref{maxw}).

The paper is organized in the following way. The next section contains preliminary material which is required for the formulation of the main result (Theorem \ref{mai}). The proof of the theorem is provided in the third section, which also contains some discussion of related open problems.  
The appendix with a general explanatory approach to dissipative solutions is rather an illustration than a collection of results, therefore its framework is not general enough to encompass the Maxwell-alpha or Euler equations. 

\section {Preliminaries, notation and the main result}

By $ \mathbb {R} ^ {n\times n} $, we denote the space of $n\times n $-matrices with the following scalar product: for $A =
(A _ {ij}) $, $B = (B _ {ij}) $,
$$ A:B = \sum\limits _ {i, j=1} ^ {n}
A _ {ij} B _ {ij}, $$ and let $ \mathbb {R} ^ {n\times n} _S $ be its
subspace of symmetric matrices.

Below in the paper, $\Omega$ is considered to be a domain (i.e. an open set in $\R^n,\	 n=2,3$) possessing the \emph{cone property}. We recall \cite{adams} that this means that each point $x\in\Omega$ is a vertex of a \emph{finite cone} $C_x$ contained in $\Omega$, and all these cones $C_x$ are congruent. A finite cone is a set of the form $$C_x=B_1\cap \{x+\xi (y-x)|y\in B_2, \xi >0 \}$$ where $B_1$ and $B_2 $ are open balls in $\R^n$, $B_1$ is centered at $x$, and $B_2$ does not contain $x$. We require this property to have Sobolev embeddings. 

We use the standard notations $L_p (\Omega, F) $, $W_p^{\beta}
(\Omega, F) $, $H^{\beta} (\Omega, F) =$ $W_2^{\beta} (\Omega,F) $, $H^{\beta}_0 (\Omega, F) = \stackrel
{\circ}{W}{}_2^{\beta} (\Omega, F) $ $(\beta > 0)$ for the Lebesgue
and Sobolev spaces of functions with values in a finite-dimensional space $F$. In this notation, sometimes we only keep the function space symbol and omit $\Omega$ and $F$, especially when $F=\mathbb {R} ^ {n\times n} _S$.

The Euclidean norm in finite-dimensional spaces $F$ is denoted as $ | \cdot | $. The symbol $ \| \cdot \| $ will stand for the Euclidean norm in $L_2(\Omega, F)$, and $ \| \cdot \| _ \beta$ will stand for the same thing in $H^{\beta} (\Omega, F)$. The corresponding scalar products are denoted by $(\cdot,\cdot)$ and $(\cdot,\cdot)_\beta$, respectively.

Let $\mathcal {V}$ be the set
of smooth, divergence-free, compactly supported  in $ \Omega $  functions with
values in $\R^n$. The symbols $H $ and $V$ 
denote the closures of $ \mathcal {V} $ in $L_2$ and $H^1$, respectively. We also use the spaces $V_i=V\cap H^i, \ i=2,3$, with the scalar product inherited from $H^i$. 

There exists a continuous Leray projection
 $$P:L_2(\Omega, \R^3)\to H.$$

Let us introduce the operator  $$\Delta_\alpha= I - \alpha^2 \Delta,$$
where $\alpha$ is the same as in \eqref{eq3}, and $I$ is the identity map.

In the space $V$, along with the scalar product $(\cdot,\cdot)_1$, we use another one
$$(u,v)_V=(u,v)+(\alpha\nabla u, \alpha\nabla v),$$
and the corresponding Euclidean norm $\|\cdot\|_V$. Let us mention the inequality
 $$\|u\|_1\leq \max\{1,1/\alpha\}\|u\|_V.$$
 
We recall the following abstract observation \cite{temam, book}. Assume that we have two Hilbert spaces, $X\subset Y,$ with
continuous embedding operator $i:X\to Y$, and $i(X)$ is dense in
$Y$. The adjoint operator $i^*:Y^*\to X^*$ is continuous and,
since $i(X)$ is dense in $Y$, one-to-one. Since $i$ is one-to-one,
$i^*(Y^*)$ is dense in $X^*$, and one may identify $Y^*$ with a
dense subspace of $X^*$. Due to the Riesz representation theorem,
one may also identify $Y$ with $Y^*$. We arrive at the chain of
inclusions:
\begin{equation} X\subset Y\equiv Y^*\subset X^*.\end{equation}
Both embeddings here are dense and continuous. Observe that in
this situation, for $f\in Y, u\in X$, their scalar product in $Y$
coincides with the value of the functional $f$ from $X^*$ on the
element $u\in X$:
\begin{equation} \label{ttt} (f,u)_Y=\langle f,u \rangle. \end{equation}
Such triples $(X,Y, X^*)$ are called Lions triples. 
 
We use the Lions triples $(V_3,V,V_3^*)$ and $(H^2,L_2,H^{-2}_N)$. In the first triple, the structure on $V$ is determined by the scalar product  $(\cdot,\cdot)_V$. In the second one, we write $H^{-2}_N$ for $(H^2)^*$ due to relation of this space to Neumann boundary value problems \cite{neum}.   

The symbols $C (\mathcal{J}; E) $, $C_w (\mathcal{J}; E) $, $L_2
(\mathcal{J}; E) $ etc. denote the spaces of continu\-ous, weakly
continuous, quadratically integrable etc. functions on an interval
$\mathcal{J}\subset \mathbb {R} $ with
values in a Banach space $E $. We recall that a function $u:
\mathcal{J} \rightarrow E$ is \textit{weakly continuous} if for
any linear continuous functional $g$ on $E$ the function  $g(
u(\cdot)): \mathcal{J}\to \mathbb{R}$ is continuous.

We require the following spaces
$$ W_1= W_1(\Omega, T) = \{\tau\in L_2 (0, T; V_3), \
\tau' \in L_2 (0, T;
 {V_3^{*}}) \},$$ $$\|\tau\|_{W_1}=\|\tau\|_{L_2 (0, T; V_3)}+\|\tau'\|_{L_2 (0, T;
 {V_3^{*}}) },$$
 $$ W_2= W_2(\Omega, T) $$ $$= \{\tau\in L_2 (0, T; H^2(\Omega, \mathbb {R} ^ {n\times n} _S)), \
\tau' \in L_2 (0, T;
 H ^ {-2}_N(\Omega,\mathbb {R} ^ {n\times n} _S)) \},$$ $$\|\tau\|_{W_2}=\|\tau\|_{L_2 (0, T; H^2)}+\|\tau'\|_{L_2 (0, T; H ^ {-2}_N)}.$$
 Here and below the prime symbol stands for the time derivative. 
 Note that $$W_1\subset C([0,T];V), \ W_2\subset C([0,T];L_2),$$ 
\be\label{d21} \frac d {dt}(u,u)_V= 2\left\langle u',u\right\rangle,\ u \in W_1,\ee
\be\label{d22}\frac d {dt}(u,u)= 2\left\langle u',u\right\rangle,\ u \in W_2,\ee
and
\be\label{td1} \frac d {dt}(u,\phi)_V= \left\langle u',\phi\right\rangle,\ u \in W_1, \phi\in V_3.\ee
\be\label{td2}\frac d {dt}(u,\phi)= \left\langle u',\phi\right\rangle,\ u \in W_2, \phi\in H^2,\ee
these facts are consequences of e.g. \cite[Lemmas 2.2.7 and 2.2.8]{book} and formula \eqref{ttt} above.

Let us introduce the operators 
 $$A_3:V_3\to V^{*}_3,\  \langle A_3 u , \varphi \rangle= (u,\varphi)_3,$$ and
$$A_2:H^2\to H^{-2}_N,\  \langle A_2 \sigma, \Phi  \rangle= (\sigma,\Phi)_2,$$ where $\varphi$ and $\Phi$ are arbitrary elements of $V_3$ and $H^2$, respectively. 

Let $\mu=\eta/\lambda$.  Consider the following formal expressions, where $w$ and $\tau$ are vector- and matrix-valued functions of time, respectively, and $\delta$ is a positive number: 
 \be E_1(w,\tau)$$ $$=-\frac {\partial \Delta_\alpha w} {\partial t} - P\sum\limits _ {i=1} ^ {n} w_i\frac {\partial
\Delta_\alpha w} {\partial x_i} - P\sum\limits _ {i=1} ^ {n}
 (\Delta_\alpha w)_i\nabla w_i +P\Divo \tau,\ee 
 \be E_2(w,\tau)$$ $$= -\frac \tau \lambda  - \frac {\partial \tau} {\partial t} - \sum\limits _ {i=1}
^ {n} w_i\frac {\partial \tau} {\partial x_i}- \tau W(w) +W(w)\tau    + 2\mu \mathcal
{E}(w). \ee
 \be E_1(w,\tau,\delta)$$ $$=-\frac {\partial \Delta_\alpha w} {\partial t} - \delta P\sum\limits _ {i=1} ^ {n} w_i\frac {\partial
\Delta_\alpha w} {\partial x_i} - \delta P\sum\limits _ {i=1} ^ {n}
 (\Delta_\alpha w)_i\nabla w_i +\delta P\Divo \tau,\ee 
 \be E_2(w,\tau,\delta)$$ $$= -\frac {\delta\tau} \lambda  - \frac {\partial \tau} {\partial t} - \delta\sum\limits _ {i=1}
^ {n} w_i\frac {\partial \tau} {\partial x_i}- \delta\tau W(w) +\delta W(w)\tau    + 2\delta\mu \mathcal
{E}(w). \ee

The symbol $C$ will stand for a generic positive constant that can take different values in different lines. We shall sometimes write $C_\Omega$ to specify that the constant depends on the domain $\Omega$ \emph{only}.

Let us recall the well-known inequalities
  \begin {equation} \label{inq1} \|uv \|\leq C_\Omega \| u \| _ 2 \| v \| ,\ u\in
H^2, v\in L_2,
\end {equation}
\begin {equation} \label{inq2} \|uv \| \leq C_\Omega \| u \| _ 1 \| v \| _ 1,\ u,v\in
H^1\end {equation}
(see e.g. \cite[Corollary 2.1.1]{book}).

The following Gronwall-like lemma will be useful.
\begin{lemma} \label{ineq} (\cite[Lemma 3.1]{diss1}) Let $f,\chi, L, M: [0,T]\to \R$ be scalar functions, $\chi, L,M \in L_1(0,T),$ and $f\in
W^1_1(0,T)$ (i.e. $f$ is absolutely continuous). If \be \chi(t)
\geq 0, L(t) \geq 0\ee  and \be f'(t)+ \chi(t)\leq L(t)f(t)+M(t)
\ee for almost all $t\in (0,T)$, then \be f(t)+ \int\limits_0^t
\chi(s)\, ds\leq $$ $$ \exp\left(\int\limits_0^t L(s) ds\right)\left[f(0)+
\int\limits_0^t \exp\left(\int\limits_s^0 L(\xi) d\xi\right) M(s)\, ds
\right]\ee for all $t\in [0,T]$.
\end{lemma}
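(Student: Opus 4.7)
The plan is to apply the classical integrating-factor trick for Gronwall inequalities, adapted to absorb the extra non-negative dissipation term $\chi$ on the left. I would multiply the hypothesis $f' + \chi \leq Lf + M$ by the positive, absolutely continuous weight $\mu(t) = \exp\left(-\int_0^t L(s)\,ds\right)$, which satisfies $\mu'(t) = -L(t)\mu(t)$ for a.e.\ $t$. After this multiplication the left side becomes a complete derivative plus a good term: one obtains $(\mu f)'(t) + \mu(t)\chi(t) \leq \mu(t) M(t)$ almost everywhere, using the product rule for absolutely continuous functions.

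Integrating from $0$ to $t$ produces the intermediate estimate
$$\mu(t)\, f(t) + \int_0^t \mu(s)\,\chi(s)\,ds \leq f(0) + \int_0^t \mu(s)\, M(s)\,ds,$$
after which dividing by $\mu(t) > 0$, equivalently multiplying by $\exp\left(\int_0^t L\right)$, reproduces the right-hand side of the claim almost verbatim: $\mu(s)/\mu(t) = \exp\left(\int_s^t L(\xi)\,d\xi\right)$, and the combination $\exp\left(\int_0^t L\right)\exp\left(\int_s^0 L\right)$ appearing in the statement is just another way of expressing this same weight.

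To finish, I would invoke the sign assumption $L \geq 0$ to conclude that $\mu(s)/\mu(t) \geq 1$ for $s \leq t$, which lets me replace the weighted integral $\int_0^t (\mu(s)/\mu(t))\,\chi(s)\,ds$ on the left by the unweighted $\int_0^t \chi(s)\,ds$ at the cost of weakening the inequality. No genuine difficulty arises; the only mildly delicate point is the product rule for absolutely continuous functions on a compact interval, which is standard. The sign assumption $L \geq 0$ is used exactly once, in this last step, purely to obtain the clean $\chi$-term stated in the conclusion; without it, one would have to carry the same weight $\exp\left(\int_s^t L\right)$ in front of $\chi(s)$.
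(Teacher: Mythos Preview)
Your argument is correct and is exactly the standard integrating-factor proof of this Gronwall-type inequality. Note, however, that the paper does not supply its own proof of this lemma: it is stated with a citation to \cite[Lemma~3.1]{diss1} and used without further justification, so there is nothing in the present paper to compare your approach against.
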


Now we can give

\begin{defn} \label{maindef} Let $a\in V$, $\sigma_0\in L_2(\Omega,\mathbb {R} ^ {n\times n} _S)$. A pair of
functions $(u,\sigma)$ from the class \be u\in C_w([0,\infty); V),\ \sigma\in
C_w([0,\infty); L_2(\Omega,\mathbb {R} ^ {n\times n} _S) ),\ee is called a {\it dissipative} solution
to problem \eqref{eq1} -- \eqref{eq6} if, 
for all test 
functions
$\zeta\in C^1([0,\infty); V_3),$ $\theta \in C^1([0,\infty); H^2(\Omega,\mathbb {R} ^ {n\times n} _S))$ and all non-negative
moments of time $t$,
one has \be \label{ds} 2\mu\|u(t)-\zeta(t)\|^2_{V}+
\|\sigma(t)-\theta(t)\|^2 $$ $$\leq
\exp\left(\int\limits_0^t \Gamma(s) ds\right)\Big\{ 
2\mu\|a-\zeta(0)\|^2_{V}+\|\sigma_0-\theta(0)\|^2 $$ $$+
\int\limits_0^t \exp\left(\int\limits_s^0 \Gamma(\xi) d\xi\right)
\big[4\mu\big( E_{1}(\zeta,\theta)(s),u(s)-\zeta(s)\big)$$
$$+ 2 \big( E_{2}(\zeta,\theta)(s), \sigma(s)-\theta(s)\big)\big]\, ds \Big\}
\ee
 where   $$\Gamma(t) =\gamma \max\{1,1/\alpha^2\} $$ \be\label{gam}\times \left(\|\Delta_\alpha \zeta(t)\|_1+ \|\zeta(t)\|_1+\alpha^2\|\zeta(t)\|_3 + \frac {(1+\mu)\|\theta(t)\|_2} {\mu}\right),\ee and $\gamma$ is a certain constant depending only on the properties of the domain $\Omega$. \end{defn}
 
Observe that the dissipative solutions satisfy the initial condition \eqref{eq6}. Indeed, at $t=0$, inequality \eqref{ds} becomes \be \label{ds0} 2\mu\|u(0)-\zeta(0)\|^2_{V}+
\|\sigma(0)-\theta(0)\|^2\leq
2\mu\|a-\zeta(0)\|^2_{V}+\|\sigma_0-\theta(0)\|^2 
\ee

This easily yields \eqref{eq6} (see a similar reasoning below, in the proof of Proposition \ref{prop41}).

Moreover, these solutions obey the following ``dissipative" estimate
\be \label{din1} 2\mu\|u(t)\|^2_{V}+
\|\sigma(t)\|^2 \leq
2\mu\|u(0)\|^2_{V}+\|\sigma(0)\|^2,\ \forall t>0.
\ee
It follows from \eqref{eq6} and \eqref{ds} with identically zero $\zeta$ and $\theta$. 

\begin{remark} \label{eac} In the Euler-alpha case, the definition is much simpler:
a
function $u\in C_w([0,\infty); V)$ is called a dissipative solution
to problem \eqref{eq1e} -- \eqref{eq6e} if, 
for all test 
functions
$\zeta\in C^1([0,\infty); V_3),$ and all non-negative
moments of time $t$,
one has \be \label{dsee} \|u(t)-\zeta(t)\|^2_{V} \leq
\exp\left(\int\limits_0^t \Gamma(s) ds\right)\Big\{ 
\|a-\zeta(0)\|^2_{V} $$ $$+
2\int\limits_0^t \exp\left(\int\limits_s^0 \Gamma(\xi) d\xi\right)
\big( E_{1}(\zeta,0)(s),u(s)-\zeta(s)\big)\, ds \Big\}
\ee
 where $$\Gamma(t) =\gamma\max\{1,1/\alpha^2\}\left(\|\Delta_\alpha \zeta(t)\|_1+ \|\zeta(t)\|_1+\alpha^2\|\zeta(t)\|_3 \right),  $$ and $\gamma$ is a certain constant depending only on $\Omega$. 
Obviously, \eqref{din1} becomes
\be \label{din2} \|u(t)\|^2_{V}\leq
\|u(0)\|^2_{V},\ \forall t>0.
\ee

\end{remark}

Our main result provides existence of dissipative solutions and their relation with the strong ones:

 \begin{theorem} \label{mai} Let $\Omega$ be a bounded domain having the cone property. a) Given $a\in V$, $\sigma_0\in L_2$, there is a
dissipative solution to problem \eqref{eq1} -- \eqref{eq6}.

b) If, for some $a\in V$, $\sigma_0\in L_2$, there exist $T>0$
and a strong solution $(u_T,\sigma_T) \in C^1([0,T]; V_3) \times C^1([0,T]; H^2)$ to problem \eqref{eq1} -- \eqref{eq6}, then the restriction
of any dissipative solution (with the same initial data) to
$[0,T]$ coincides with $(u_T,\sigma_T)$.

c) Every strong solution $(u,\sigma)\in C^1([0,\infty); V_3) \times C^1([0,\infty); H^2)$ is a (unique) dissipative solution.

\end{theorem}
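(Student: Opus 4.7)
The plan is to prove (c) first---its differential identity underlies everything---then derive (b) by a one-line substitution, and finally obtain (a) via a Galerkin scheme.

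\emph{Part (c).} Let $(u,\sigma)$ be a strong solution and fix test functions $\zeta \in C^1([0,\infty);V_3)$, $\theta \in C^1([0,\infty);H^2)$. Using \eqref{d21}--\eqref{d22} I would differentiate $f(t) = 2\mu\|u-\zeta\|_V^2 + \|\sigma-\theta\|^2$, substitute $E_1(u,\sigma)=0$ and $E_2(u,\sigma)=0$ for the time derivatives, and reorganize by adding and subtracting $E_1(\zeta,\theta)$, $E_2(\zeta,\theta)$. Four algebraic identities must be invoked: the Euler-$\alpha$ cancellation $(\sum u_i \partial_i \Delta_\alpha u + \sum (\Delta_\alpha u)_i \nabla u_i, u)=0$ (via $\divo u=0$ and $u|_{\partial\Omega}=0$); $(\sum u_i\partial_i \sigma,\sigma)=0$ by incompressibility; $(\sigma W(u)-W(u)\sigma,\sigma)=0$ by antisymmetry of $W$ coupled with symmetry of $\sigma$; and the velocity--stress coupling $2\mu(P\Divo\sigma,u) + (2\mu \mathcal{E}(u),\sigma)=0$, which is exactly what fixes the factor $2\mu$ in front of the $V$-norm. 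After these cancellations, $f'$ splits into the \emph{affine} drivers $4\mu(E_1(\zeta,\theta),u-\zeta) + 2(E_2(\zeta,\theta),\sigma-\theta)$ and quadratic-in-error remainders whose coefficients involve only $\zeta$ and $\theta$. The remainders are absorbed into $\Gamma(t)f(t)$ via \eqref{inq1}--\eqref{inq2}, and Lemma \ref{ineq} delivers \eqref{ds}.

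\emph{Part (b).} Plug $\zeta = u_T$, $\theta = \sigma_T$ into \eqref{ds}. Since $(u_T,\sigma_T)$ solves the system on $[0,T]$, we have $E_1(u_T,\sigma_T)\equiv 0$ and $E_2(u_T,\sigma_T)\equiv 0$ there, while the initial bracket vanishes because $u_T(0)=a$ and $\sigma_T(0)=\sigma_0$. Thus the right-hand side of \eqref{ds} is zero on $[0,T]$, which forces $u=u_T$ and $\sigma=\sigma_T$ on that interval.

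\emph{Part (a).} Construct Galerkin approximations $(u_N,\sigma_N)$ in the span of the first $N$ eigenfunctions of $A_3$ and $A_2$; finite-dimensional ODE theory gives solvability as long as the discrete version of the energy computation from (c)---applied with $\zeta=\theta=0$---provides the a priori bound \eqref{din1}, which it does uniformly in $N$. This yields $\|u_N\|_{L_\infty(0,T;V)} + \|\sigma_N\|_{L_\infty(0,T;L_2)} \leq C$ for every $T$, together with dual estimates of $u_N', \sigma_N'$ in $V_3^*$ and $H^{-2}_N$ that give weak continuity in time of the limit. Extract a weak-$*$ convergent subsequence $(u,\sigma)$. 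The decisive point making the dissipative formalism useful is that the right-hand side of the $N$-th version of \eqref{ds} is \emph{affine} in $(u_N,\sigma_N)$ while its left-hand side is \emph{convex}, so limit passage needs only linearity on one side and weak lower semicontinuity on the other; no strong compactness on the unknowns is required.

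The main obstacle is the bookkeeping in (c): every quadratic remainder produced by expanding the nonlinearities must be absorbable into $\Gamma(t)f(t)$. The precise structure of $\Gamma$ in \eqref{gam}---in particular the presence of $\|\zeta\|_3$ (needed to dominate the $\nabla \Delta_\alpha \zeta$-type quantities via Sobolev embedding under the cone property), the $\|\theta\|_2$ contribution, and the $\max\{1,1/\alpha^2\}$ prefactor converting $V$-norms to $H^1$-norms---is forced entirely by these estimates, and the constant $\gamma$ is whatever emerges from them. Once the algebra of (c) is verified, (b) is immediate and (a) is routine thanks to the convex--affine structure of \eqref{ds}.
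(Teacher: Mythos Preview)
Your treatment of (b) is identical to the paper's, and your direct derivation of (c) is essentially the energy computation the paper carries out in Lemma~\ref{diin} (with $\delta=1$, $\varepsilon=0$); the paper instead deduces (c) a posteriori from (a) and (b), but either order is fine and the underlying differential identity is the same.

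The genuine divergence is in (a). The paper does \emph{not} use a Galerkin scheme. It introduces a parabolic regularization of the full system, adding $\varepsilon A_3 u$ and $\varepsilon A_2\sigma$ (problem \eqref{au1}--\eqref{au6}), proves existence of weak solutions to the regularized problem by Schaeffer's fixed-point theorem (Lemma~\ref{lews}), and then lets $\varepsilon\to 0$. The point of the $\varepsilon$-terms is that they force the approximate solution into $L_2(0,T;V_3)\times L_2(0,T;H^2)$, so $u-\zeta$ and $\sigma-\theta$ are legitimate test functions in the \emph{full} weak formulation for \emph{arbitrary} $(\zeta,\theta)$; the dissipative inequality then drops out exactly (Lemma~\ref{diin}), with only the small extra terms $-4\mu\varepsilon(\zeta,u-\zeta)_3-2\varepsilon(\theta,\sigma-\theta)_2$, which vanish in the limit by \eqref{ae2}.

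Your Galerkin route can be made to work, but you gloss over the one nontrivial step: the Galerkin equations hold only against test functions in the $N$-dimensional span, so you cannot directly test with $u_N-\zeta$ for a general $\zeta\in C^1([0,\infty);V_3)$. You must either (i) replace $(\zeta,\theta)$ by their spectral projections $(P_N\zeta,Q_N\theta)$, derive the $N$-th dissipative inequality with those, and then argue that $P_N\zeta\to\zeta$ in $C^1([0,T];V_3)$ and $Q_N\theta\to\theta$ in $C^1([0,T];H^2)$ strongly enough to pass the limit in $E_1,E_2,\Gamma$; or (ii) observe that $(u_N',(I-P_N)\zeta)_V=0$ by $V$-orthogonality of the spectral projection and track the residual nonlinear pairings against $(I-P_N)\zeta$. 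Either fix is routine for a spectral Galerkin basis on a bounded domain, but it is precisely the bookkeeping the paper's $\varepsilon$-regularization is designed to avoid. As written, your sentence ``the $N$-th version of \eqref{ds}'' hides this issue.
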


\section{A regularization and passage to the limit}

In order to prove Theorem \ref{mai} via approximation, we consider the following auxiliary problem:

\begin{equation} \label{au1} \frac {\partial v} {\partial t} + \delta\sum\limits _ {i=1} ^ {n} u_i\frac {\partial
v} {\partial x_i} + \delta\sum\limits _ {i=1} ^ {n}
 v_i\nabla u_i +\nabla p+\varepsilon A_3 u =\delta \Divo \sigma, \end{equation}
\begin{equation}  \label{au2} \delta\sigma + \lambda \left(\frac {\partial \sigma} {\partial t} + \delta\sum\limits _ {i=1}
^ {n} u_i\frac {\partial \sigma} {\partial x_i}+ \delta\sigma W - \delta W\sigma +\varepsilon A_2 \sigma\right)   = 2\delta\eta \mathcal
{E}, \end{equation}
\be \label{au3}  v=\Delta_\alpha u,\ee
\begin{equation} \divo u =
0,\end{equation}
\begin{equation} u \Big | _ {\partial\Omega} =0,\end{equation}
\begin{equation} \label{au6}  u | _ {t=0} =\delta a, \ \sigma | _ {t=0} =\delta \sigma_0.\end{equation} 

Here, $\varepsilon>0$ and $0\leq \delta \leq 1$ are parameters. The first one will then go to zero, and the second one is needed for correct application of Schaeffer's theorem \cite[p. 539]{evans}.
We keep assuming $a\in V,$ $\sigma_0\in L_2$.

The weak formulation of \eqref{au1} -- \eqref{au6} is as follows. 

\begin{defn} A pair of
functions $(u,\sigma)$ from the class \be u \in W_1,\ \sigma\in
W_2\ee is a {\it weak} solution to problem \eqref{au1} -- \eqref{au6} if the equalities

   $$\frac {d} {d t} (u, \varphi)_V -
   \delta\sum\limits^n _ {i=1} (u_i v, \frac {\partial\varphi} {\partial x_i}) + \delta\sum\limits _ {i=1} ^ {n}
   (v_i \nabla u_i, \varphi) $$ \begin{equation}  \label{w1}  +  \varepsilon (u,\varphi)_3 + \delta(\sigma, \nabla \varphi) = 0,
   \end{equation}
   and
    $$\frac {d} {d t} (\sigma, \Phi) + \frac {\delta} {\lambda} (\sigma, \Phi)-
   \delta\sum\limits^n _ {i=1} (u_i\sigma, \frac {\partial\Phi} {\partial x_i})   $$ \begin{equation}  \label{w2}  +\delta(\sigma
W-W\sigma, \Phi)  + \varepsilon (\sigma, 
\Phi)_2 =2\delta\mu
   (\nabla u, \Phi).\end{equation}   are satisfied  for all
 $ \varphi\in V_3, \ \Phi\in H^2(\Omega,\mathbb {R} ^ {n\times n} _S) $ almost everywhere in $(0, T)$, and \eqref{au3} and \eqref{au6} hold. \end{defn}
 
 \begin{remark} This notion of weak solution can be derived in a standard framework of multiplying by a test function, integrating by parts and using formula \eqref{td2} (see e.g. \cite{temam} concerning the Navier-Stokes system,  see also \cite[Section 6.1.1]{book} for general remarks about weak formulation of equations). Note that formula \eqref{td1} should not be  exploited at this stage. \end{remark}

\begin{lemma} \label{diin} Let $\Omega$ be any domain having the cone property.  Let $u,\sigma$ be a weak solution to problem \eqref{au1} -- \eqref{au6}. Then, for all 
$\zeta\in C^1([0,\infty); V_3),$ $\theta \in C^1([0,\infty); H^2)$ and $0\leq t\leq T$,
one has \be \label{disdel} 2\mu\|u(t)-\zeta(t)\|^2_{V}+
\|\sigma(t)-\theta(t)\|^2 $$ $$+ 2\varepsilon\int\limits_0^t
(2\mu\|u(s)-\zeta(s)\|_3^2+\|\sigma(s)-\theta(s)\|_2^2)\, ds$$ $$\leq
\exp\left(\int\limits_0^t \delta\Gamma(s) ds\right)\Big\{ 
2\mu\|\delta a-\zeta(0)\|^2_{V}+\|\delta\sigma_0-\theta(0)\|^2 $$ $$+
\int\limits_0^t \exp\left(\int\limits_s^0 \delta\Gamma(\xi) d\xi\right)
\big[4\mu\big( E_{1}(\zeta,\theta,\delta)(s),u(s)-\zeta(s)\big)$$
$$+ 2 \big( E_{2}(\zeta,\theta,\delta)(s), \sigma(s)-\theta(s)\big)-4\mu\varepsilon (\zeta(s),u(s)-\zeta(s))_3 $$
$$ -2\varepsilon(\theta(s), \sigma(s)-\theta(s))_2 \big]\, ds \Big\}
\ee where $\Gamma$ is as in \eqref{gam}. \end{lemma}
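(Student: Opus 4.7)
The plan is to derive a pointwise-in-time differential inequality for
$f(t) := 2\mu\|u(t)-\zeta(t)\|_V^2 + \|\sigma(t)-\theta(t)\|^2$
and then apply the Gronwall-type Lemma \ref{ineq}. Set $y := u - \zeta$ and $\xi := \sigma - \theta$. Since $\zeta \in C^1([0,\infty);V_3)$, $\theta\in C^1([0,\infty); H^2)$ and $(u,\sigma)\in W_1\times W_2$, we have $y(t)\in V_3$ and $\xi(t)\in H^2$ for a.e.\ $t$, so $y(t)$ and $\xi(t)$ are legitimate time-independent test functions in \eqref{w1} and \eqref{w2} at each such $t$. Using \eqref{d21}--\eqref{d22} and \eqref{td1}--\eqref{td2} one has
\be
\frac{d}{dt}\|y\|_V^2 = 2\langle u',y\rangle - 2(\zeta',y)_V, \qquad \frac{d}{dt}\|\xi\|^2 = 2\langle \sigma',\xi\rangle - 2(\theta',\xi),
\ee
and $\langle u',y\rangle$, $\langle \sigma',\xi\rangle$ are read directly off \eqref{w1} and \eqref{w2}.

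Next I would expand $u = \zeta + y$ and $\sigma = \theta + \xi$ inside every convection, vorticity and stress-divergence term on the right-hand sides. After integration by parts (using $\divo y = 0$ and $y|_{\partial\Omega}=0$), the contributions that are affine in $(y,\xi)$ reassemble into the pairings $4\mu\bigl(E_1(\zeta,\theta,\delta),y\bigr)$ and $2\bigl(E_2(\zeta,\theta,\delta),\xi\bigr)$: the $-\partial_t\Delta_\alpha\zeta$ and $-\partial_t\theta$ pieces come from $-(\zeta',y)_V$ and $-(\theta',\xi)$, the convection, vorticity and $\Divo$ terms are exactly those appearing in $E_1,E_2$, and the pressure contribution drops because $y\in V$. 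The purely cubic-in-$(y,\xi)$ residual then vanishes identically thanks to three standard cancellations: $\sum_i(y_i\,\Delta_\alpha y,\,\partial_i y) + \sum_i((\Delta_\alpha y)_i\nabla y_i,\,y) = 0$ (divergence-free integration by parts), $\sum_i(y_i\xi,\partial_i\xi) = 0$ (same reason, componentwise in matrix indices), and $(\xi W(y) - W(y)\xi,\,\xi) = 0$ by antisymmetry of $W(y)$ under the trace product. The $\varepsilon A_3$ and $\varepsilon A_2$ terms expand as $\varepsilon(u,y)_3 = \varepsilon\|y\|_3^2 + \varepsilon(\zeta,y)_3$ and analogously for $\sigma$, producing the dissipation $2\varepsilon(2\mu\|y\|_3^2 + \|\xi\|_2^2)$ on the left and the correction $-4\mu\varepsilon(\zeta,y)_3 - 2\varepsilon(\theta,\xi)_2$ on the right.

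What is left is a \emph{mixed} nonlinear remainder, quadratic in $(y,\xi)$ with at least one factor involving $\zeta$, $\theta$ or their derivatives. Applying \eqref{inq1}--\eqref{inq2}, the Sobolev embedding of $H^2$ into $L_\infty$ (valid under the cone property), and $\|y\|_1 \leq \max\{1,1/\alpha\}\|y\|_V$, each such mixed term is bounded by $\delta$ times one of $\|\Delta_\alpha\zeta\|_1$, $\|\zeta\|_1$, $\alpha^2\|\zeta\|_3$, or $\|\theta\|_2$ (with coefficient $(1+\mu)/\mu$ once one accounts for the Maxwell coupling $2\delta\mu(\nabla u,\xi)-\delta(\sigma,\nabla y)$), multiplied by $(\|y\|_V^2 + \|\xi\|^2)$; summing with the weights $4\mu$ and $2$ and absorbing residual prefactors into $\max\{1,1/\alpha^2\}$ reproduces exactly $\delta\Gamma(t)\,f(t)$ with $\Gamma$ as in \eqref{gam}, for a sufficiently large constant $\gamma=\gamma(\Omega)$. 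Collecting everything yields
\be
f'(t) + 2\varepsilon\bigl(2\mu\|y\|_3^2 + \|\xi\|_2^2\bigr) \leq \delta\Gamma(t)\,f(t) + M(t),
\ee
where $M(t)$ is precisely the bracketed integrand of \eqref{disdel}. Lemma \ref{ineq}, with $f(0) = 2\mu\|\delta a - \zeta(0)\|_V^2 + \|\delta\sigma_0 - \theta(0)\|^2$ from \eqref{au6}, delivers \eqref{disdel}.

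The main obstacle is precisely this bookkeeping: verifying that all cubic-in-$(y,\xi)$ terms cancel exactly, and that the surviving mixed terms fit, after $4\mu$ and $2$ weighting, into the $\Gamma$ prescribed in \eqref{gam}. In particular, tracking how the Maxwell coupling $2\delta\mu(\nabla u,\xi) - \delta(\sigma,\nabla y)$ redistributes under $u=\zeta+y$, $\sigma=\theta+\xi$ so that the factor $(1+\mu)/\mu$ in front of $\|\theta\|_2$ emerges correctly is the most delicate point.
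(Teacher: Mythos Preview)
Your strategy is exactly the paper's: subtract, test with $(y,\xi)$, exploit cancellations, estimate the mixed terms, and apply Lemma~\ref{ineq}. Two points in your bookkeeping sketch are off and worth correcting.

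First, the Maxwell coupling is \emph{not} where $(1+\mu)/\mu$ comes from. After the $4\mu$/$2$ weighting, the quadratic-in-$(y,\xi)$ pieces of $\delta(\sigma,\nabla y)$ and $2\delta\mu(\nabla u,\xi)$ are $2\mu\delta(\xi,\nabla y)$ and $2\mu\delta(\nabla y,\xi)$, which cancel identically; the affine pieces go into $E_1,E_2$. The factor $(1+\mu)/\mu$ arises purely from the Young splitting $C\|\theta\|_2\|y\|_1\|\xi\|\le \tfrac{C}{2}\|\theta\|_2\bigl(\mu\|y\|_1^2+\mu^{-1}\|\xi\|^2\bigr)$ applied to the genuine mixed remainders $-\delta\sum_i(y_i\theta,\partial_i\xi)$ and $\delta(\theta W(y)-W(y)\theta,\xi)$; the expression $(1+\mu)/\mu$ is just a convenient upper bound for $\max\{1,1/\mu\}$.

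Second, your trichotomy (affine / purely cubic / mixed quadratic) is slightly dangerous. A full expansion also produces $\delta\sum_i(\zeta_i\xi,\partial_i\xi)$ and $\delta(\xi W(\zeta)-W(\zeta)\xi,\xi)$, which are quadratic in $\xi$ with a $\zeta$-coefficient and hence fall into your ``mixed'' bin. The first of these contains $\nabla\xi$ and \emph{cannot} be bounded by $\|\xi\|^2$; it must be recognised as zero via $\divo\zeta=0$, not estimated. The paper avoids this by not fully expanding: it keeps $u$ in the stress transport and vorticity terms and uses $\sum_i(u_i\varsigma,\partial_i\varsigma)=0$ and $(\varsigma W(u)-W(u)\varsigma,\varsigma)=0$ directly. (Minor: your Euler-$\alpha$ identity needs a sign, $-\sum_i(y_i\Delta_\alpha y,\partial_i y)+\sum_i((\Delta_\alpha y)_i\nabla y_i,y)=0$, cf.\ \eqref{equal}.)
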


%\begin{remark} O regulyarnosti $A_3\zeta$ i $A_2\theta$. \end{remark}

\begin{proof}  Observe that $$\frac {d} {d t} (\zeta, \varphi)_V -
   \delta\sum\limits^n _ {i=1} (\zeta_i \Delta_\alpha\zeta, \frac {\partial\varphi} {\partial x_i}) +  \delta\sum\limits _ {i=1} ^ {n}
   ((\Delta_\alpha\zeta)_i \nabla \zeta_i, \varphi) +(E_{1}(\zeta,\theta,\delta),\varphi)$$ \begin{equation} \label{z1} +  \varepsilon (\zeta,\varphi)_3 + \delta(\theta, \nabla \varphi) = \varepsilon (\zeta,\varphi)_3,
   \end{equation}
   and
    $$\frac {d} {d t} (\theta, \Phi) + \frac { \delta} {\lambda} (\theta, \Phi)- \delta
   \sum\limits^n _ {i=1} (\zeta_i\theta, \frac {\partial\Phi} {\partial x_i})   + \delta(\theta
W(\zeta)-W(\zeta)\theta, \Phi) $$ \begin{equation} \label{z2}+ (E_{2}(\zeta,\theta,\delta),\Phi) + \varepsilon (\theta, 
\Phi)_2 =2 \delta\mu
   (\nabla \zeta, \Phi)+ \varepsilon (\theta, 
\Phi)_2.\end{equation} 
      for
 $ \varphi\in V_3, \ \Phi\in H^2 $. 
Denote $w=u-\zeta$ and $\varsigma=\sigma-\theta$. For almost all $t\in(0,T)$, put $\varphi=w(t)$ and $\Phi=\varsigma(t)$. Multiply the difference between \eqref{w1} and \eqref{z1} by $2\mu$, and add this to the difference between \eqref{w2} and \eqref{z2}, arriving at

$$ \mu \frac {d} {d t} (w, w)_V +\frac 1 2 \frac {d} {d t} (\varsigma, \varsigma)$$ $$-
   2 \delta\mu\sum\limits^n _ {i=1} (\zeta_i \Delta_\alpha w, \frac {\partial w} {\partial x_i}) -
   2 \delta\mu\sum\limits^n _ {i=1} (w_i \Delta_\alpha \zeta, \frac {\partial w} {\partial x_i}) - 2 \delta\mu\sum\limits^n _ {i=1} (w_i \Delta_\alpha w, \frac {\partial w} {\partial x_i})$$ $$+  2 \delta\mu\sum\limits _ {i=1} ^ {n}
   ((\Delta_\alpha \zeta)_i \nabla w_i, w) +2 \delta\mu\sum\limits _ {i=1} ^ {n}
   ((\Delta_\alpha w)_i \nabla \zeta_i, w)+2 \delta\mu\sum\limits _ {i=1} ^ {n}
   ((\Delta_\alpha w)_i \nabla w_i, w)$$ $$ + \frac { \delta} {\lambda} (\varsigma, \varsigma)-
    \delta\sum\limits^n _ {i=1} (u_i\varsigma, \frac {\partial\varsigma} {\partial x_i})   -
    \delta\sum\limits^n _ {i=1} (w_i\theta, \frac {\partial\varsigma} {\partial x_i})  $$ $$+ \delta(\theta
W(w)-W(w)\theta, \varsigma)  + \delta(\varsigma
W(u)-W(u)\varsigma, \varsigma)  + \varepsilon (\varsigma, 
\varsigma)_2 +  2\mu\varepsilon (w,w)_3$$ \begin{equation}  =2\mu(E_{1}(\zeta,\theta, \delta),w)\label{zzzz} + (E_{2}(\zeta,\theta, \delta),\varsigma) -2\mu\varepsilon (\zeta, 
w)_3-\varepsilon (\theta, 
\varsigma)_2.\end{equation} 

We will need the following equalities:
\be\label{equal}-\sum\limits^n _ {i=1} (\kappa_i \Delta_\alpha  \kappa, \frac {\partial \kappa} {\partial x_i}) + \sum\limits _ {i=1} ^ {n}
   (\Delta_\alpha \kappa_i \nabla \kappa_i, \kappa)=0. \ee
\be \label{equal1} \sum\limits^n _ {i=1} (\kappa_i\tau, \frac {\partial\tau} {\partial x_i})=0. \ee
Here $\kappa \in V_3$ and $\tau\in H^2$.
The first equality is trivial. The second one is well-known and may be obtained via integration by parts.

Note that \be(\varsigma
W(u)-W(u)\varsigma, \varsigma)(t)=\sum\limits_{i,j,k=1}^n \int\limits_{\Omega}\big(\varsigma_{ij}(t,x)
W_{jk}(t,x)\varsigma_{ik}(t,x)$$ $$-W_{jk}(t,x)\varsigma_{ki}(t,x)
\varsigma_{ji}(t,x)\big)\,dx=0, \ee
since $\varsigma$ is a symmetric matrix. Moreover,
 \be - \sum\limits^n _ {i=1} (w_i \Delta_\alpha w, \frac {\partial w} {\partial x_i})+\sum\limits _ {i=1} ^ {n}
   ((\Delta_\alpha w)_i \nabla w_i, w)=0,\ee 
   and
   \be \sum\limits^n _ {i=1} (u_i\varsigma, \frac {\partial\varsigma} {\partial x_i})=0 \ee
   (by \eqref{equal} and \eqref{equal1}, respectively.). 

Now, let us estimate the remaining nonlinear terms in \eqref{zzzz}. Integrating by parts, we get

$$\sum\limits^n _ {i=1} (\zeta_i \Delta_\alpha w, \frac {\partial w} {\partial x_i})=\sum\limits^n _ {i=1} ( \zeta_i w, \frac {\partial w} {\partial x_i})+\alpha^2\sum\limits^n _ {i,j=1} ( \nabla\zeta_i \nabla w_j, \frac {\partial w_j} {\partial x_i})$$ $$+\alpha^2\sum\limits^n _ {i,j=1} (\zeta_i \nabla w_j, \frac {\partial \nabla w_j} {\partial x_i}).$$

The first and the last terms vanish by \eqref{equal1}. The second one, due to the Cauchy-Bunyakovsky-Schwarz inequality and \eqref{inq1}, does not exceed $C_\Omega \alpha^2\|\zeta\|_3\|w\|_1^2.$
By \eqref{inq2},
$$\sum\limits^n _ {i=1} (w_i \Delta_\alpha \zeta, \frac {\partial w} {\partial x_i}) \leq C_\Omega \|\Delta_\alpha \zeta\|_1\|w\|_1^2,$$
and 
$$\sum\limits _ {i=1} ^ {n}
   ((\Delta_\alpha \zeta)_i \nabla w_i, w) \leq C_\Omega \|\Delta_\alpha \zeta\|_1\|w\|_1^2.$$
Then,
   $$\sum\limits _ {i=1} ^ {n}
   ((\Delta_\alpha w)_i \nabla \zeta_i, w)= \sum\limits _ {i=1} ^ {n}
   (w_i \nabla \zeta_i, w)$$ $$+\alpha^2\sum\limits^n _ {i,j=1}(\nabla w_i \nabla \frac{\partial\zeta_i}{\partial x_j}, w_j)+\alpha^2\sum\limits^n _ {i,j=1}(\nabla w_i \frac{\partial\zeta_i}{\partial x_j}, \nabla w_j)$$
     $$\leq
   C_\Omega \|\zeta\|_1\|w\|^2_1+C_\Omega \alpha^2\|\zeta\|_3\|w\|^2_1.$$
  Further, by integration by parts, $$-\sum\limits^n _ {i=1} (w_i\theta, \frac {\partial\varsigma} {\partial x_i})=$$
$$\sum\limits^n _ {i=1} (\frac {\partial w_i} {\partial x_i} \theta, \varsigma)+\sum\limits^n _ {i=1} (w_i \frac {\partial\theta} {\partial x_i}, \varsigma).$$
The first term is zero since $w$ is divergence-free, and the second one, by \eqref{inq2}, is bounded by $C_\Omega \|w\|_1\|\theta\|_2 \|\varsigma\|.$  Finally, by \eqref{inq1}, $$(\theta
W(w)-W(w)\theta, \varsigma)\leq C_\Omega \|w\|_1\|\theta\|_2 \|\varsigma\|.$$

Now, for certain $\gamma$ depending on $\Omega$ only, \eqref{zzzz} yields

$$ \frac {d} {d t} (2\mu \|w\|_V^2 +\|\varsigma\|^2)+ 2\varepsilon(2\mu \|w\|_3^2 + \|\varsigma\|_2^2)$$ $$\leq 2\delta\mu C_\Omega \|\Delta_\alpha \zeta\|_1\|w\|_1^2+ 2\delta\mu C_\Omega \|\zeta\|_1\|w\|^2_1$$ $$+2\delta\mu C_\Omega \alpha^2\|\zeta\|_3\|w\|^2_1+\delta C_\Omega \|w\|_1\|\theta\|_2 \|\varsigma\|$$ $$  +4\mu(E_{1}(\zeta,\theta,\delta),w) + 2(E_{2}(\zeta,\theta,\delta),\varsigma) -4\mu\varepsilon (\zeta, 
w)_3-2\varepsilon (\theta, 
\varsigma)_2$$ $$\leq \delta\gamma \left(2\mu \max\{1,1/\alpha^2\} \|w\|_V^2 \big(\|\Delta_\alpha \zeta\|_1+ \|\zeta\|_1+\alpha^2\|\zeta\|_3+ \|\theta\|_2\big) +\|\varsigma\|^2 \frac {\|\theta\|_2} {\mu}\right)$$ $$ +4\mu(E_{1}(\zeta,\theta,\delta),w) + 2(E_{2}(\zeta,\theta,\delta),\varsigma) -4\mu\varepsilon (\zeta, 
w)_3-2\varepsilon (\theta, 
\varsigma)_2$$ $$\leq \delta\Gamma (2\mu \|w\|_V^2 +\|\varsigma\|^2)$$ \begin{equation}  +4\mu(E_{1}(\zeta,\theta,\delta),w)\label{z4} + 2(E_{2}(\zeta,\theta,\delta),\varsigma) -4\mu\varepsilon (\zeta, 
w)_3-2\varepsilon (\theta, 
\varsigma)_2,\end{equation}
with $\Gamma$ from \eqref{gam}.  It remains to apply Lemma \ref{ineq} to this inequality.
\end{proof}

\begin{lemma} \label{leae} Let $u,\sigma$ be a weak solution to problem \eqref{au1} -- \eqref{au6}. The following estimates are valid: \be \label{ae1} 2\mu
\|u\|_{L_\infty(0,T;V)}+\|\sigma\|_{L_\infty(0,T;L_2)}\leq 2\mu
\|a\|_V+\|\sigma_0\|=C,\ee \be \label{ae2}
\|u\|_{L_2(0,T;V_3)}+\|\sigma\|_{L_2(0,T;H^2)}\leq  \frac C{\sqrt{\varepsilon}},\ee
\be\label{ae3} \|u'\|_{L_2(0,T;V_3^*)}+\|\sigma'\|_{L_2(0,T;H^{-2}_N)}\leq  C (1+
\sqrt{\varepsilon}).\ee  The constants $C$ are
independent of $\varepsilon$ and $\delta$, but depend on $\|a\|_{V}$,
$\|\sigma_0\|$, $T$.
\end{lemma}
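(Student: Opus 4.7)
The plan is to derive the three bounds from a single energy identity (for \eqref{ae1}--\eqref{ae2}) together with duality estimates (for \eqref{ae3}), specializing the reasoning behind Lemma \ref{diin} to $\zeta=\theta=0$ while retaining the regularizing $\varepsilon$-terms.

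For \eqref{ae1} and \eqref{ae2}, I would substitute $\varphi=u(t)\in V_3$ in \eqref{w1} and $\Phi=\sigma(t)\in H^2$ in \eqref{w2} (both admissible for a.e.\ $t$ by \eqref{d21}--\eqref{d22}). The convective pair $-\sum(u_iv,\partial u/\partial x_i)+\sum(v_i\nabla u_i,u)$ vanishes by \eqref{equal}, $\sum(u_i\sigma,\partial\sigma/\partial x_i)=0$ by \eqref{equal1}, and $(\sigma W-W\sigma,\sigma)=0$ by the symmetry of $\sigma$, exactly as in the proof of Lemma \ref{diin}. Multiplying the first identity by $4\mu$ and the second by $2$ makes the cross terms $4\mu\delta(\sigma,\nabla u)-4\mu\delta(\nabla u,\sigma)$ cancel, producing
\be
\frac{d}{dt}(2\mu\|u\|_V^2+\|\sigma\|^2)+4\mu\varepsilon\|u\|_3^2+2\varepsilon\|\sigma\|_2^2+\frac{2\delta}{\lambda}\|\sigma\|^2=0.
\ee
Integrating from $0$ to $t$ using $u(0)=\delta a$, $\sigma(0)=\delta\sigma_0$, $\delta\in[0,1]$ yields \eqref{ae1} upon dropping the dissipative terms, and \eqref{ae2} upon dividing the integrated $\varepsilon$-weighted terms by $\varepsilon$.

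For \eqref{ae3}, I would view \eqref{w1}--\eqref{w2} via \eqref{td1}--\eqref{td2} as identities for $\langle u',\varphi\rangle$ and $\langle\sigma',\Phi\rangle$, and bound the dual norms as suprema over unit test functions. The easy terms go through immediately: $|\delta(\sigma,\nabla\varphi)|\leq\|\sigma\|\|\varphi\|_{V_3}$ and $|\varepsilon(u,\varphi)_3|\leq\varepsilon\|u\|_3\|\varphi\|_{V_3}$ give $L_2(0,T)$ contributions of order $C$ and $C\sqrt{\varepsilon}$, and the $\sigma'$-equation is handled similarly via \eqref{inq1}--\eqref{inq2}, whose transport, Jaumann and production terms are of the form $C\|u\|_V\|\sigma\|\|\Phi\|_2$ or $C\|u\|_V\|\Phi\|_2$ and hence $L_2$-integrable in time uniformly in $\varepsilon$ by \eqref{ae1}. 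The delicate point is the convective contribution to $\|u'\|_{L_2(V_3^*)}$: a direct pairing $\|uv\|\leq C\|u\|_V\|u\|_3$ via \eqref{inq2} and $\|v\|_1\leq C\|u\|_3$ yields only $C/\sqrt{\varepsilon}$, since $\|u\|_{L_2 V_3}$ grows like $1/\sqrt{\varepsilon}$ in \eqref{ae2}. The remedy is to expand $v=u-\alpha^2\Delta u$: the purely $u$-quadratic pieces are controlled directly (using $|\sum(u_iu,\partial\varphi/\partial x_i)|\leq C\|u\|_{L_4}^2\|\varphi\|_{V_3}$ and the vanishing of $\sum(u_i\nabla u_i,\varphi)$ via the identity $\sum_i u_i\partial u_i/\partial x_k=\tfrac12\partial|u|^2/\partial x_k$ paired against the divergence-free $\varphi$), while in the $\Delta u$-pieces I would integrate the Laplacian by parts onto $\varphi\in V_3\hookrightarrow H^3$, using $\nabla\cdot u=\nabla\cdot\varphi=0$ and $u|_{\partial\Omega}=0$ to suppress the most singular contributions. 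The surviving couplings $\|\nabla u\|_{L_2}^2\|\nabla\varphi\|_{L_\infty}$ and $\|u\|_{L_3}\|\nabla u\|_{L_2}\|\nabla^2\varphi\|_{L_6}$ are bounded for $n\leq 3$ by the Sobolev embeddings granted by the cone property, yielding overall $C\|u\|_V^2\|\varphi\|_{V_3}$. Since $\|u\|_V\in L_\infty(0,T)$ by \eqref{ae1}, this contribution is $L_2$-in-time bounded independently of $\varepsilon$, completing \eqref{ae3}.
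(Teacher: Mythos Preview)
Your argument is correct and coincides with the paper's. For \eqref{ae1}--\eqref{ae2} the paper simply invokes \eqref{disdel} with $\zeta\equiv\theta\equiv0$, which is exactly the energy identity you rederive; for \eqref{ae3} the paper likewise expands $v=u-\alpha^2\Delta u$, integrates the Laplacian by parts onto $\varphi\in V_3$, and uses that the resulting second-order-in-$u$ term $\sum(\varphi_k\,\partial_j u_i,\partial_j\partial_k u_i)$ vanishes via \eqref{equal1}, leaving only $\|\nabla u\|^2$- and $\|u\|\,\|\nabla u\|$-type couplings controlled by $\|u\|_{L_\infty(0,T;V)}$---the same mechanism you describe, with only cosmetic differences in the H\"older exponents chosen.
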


\begin{proof} The estimates \eqref{ae1} and \eqref{ae2} are direct consequences of \eqref{disdel} with $\zeta \equiv \theta\equiv 0$. It
remains to estimate the time derivatives, expressing them from \eqref{w1} and \eqref{w2}, and taking into account \eqref{td1} and  \eqref{td2}: \be\label{dert}\|\langle u', \varphi \rangle\|_{L_2(0,T)}\leq
  \delta\|\sum\limits^n _ {i=1} (u_i u, \frac {\partial\varphi} {\partial x_i})\|_{L_2(0,T)} + \delta\|\sum\limits _ {i=1} ^ {n}
   (u_i \nabla u_i, \varphi)\|_{L_2(0,T)}$$ 
   $$+\delta\alpha^2 \|\sum\limits^n _ {i=1} (u_i \Delta u, \frac {\partial\varphi} {\partial x_i})\|_{L_2(0,T)} + \delta\alpha^2\|\sum\limits _ {i=1} ^ {n}
   (\Delta u_i \nabla u_i, \varphi)\|_{L_2(0,T)}$$  $$+  \varepsilon \|(u,\varphi)_3\|_{L_2(0,T)} + \delta\|(\sigma, \nabla \varphi)\|_{L_2(0,T)}, \ee
   and
   \be\label{derts}\|\langle \sigma', \Phi \rangle\|_{L_2(0,T)}\leq  \frac {\delta} {\lambda} \|(\sigma, \Phi)\|_{L_2(0,T)}+
   \delta\|\sum\limits^n _ {i=1} (u_i\sigma, \frac {\partial\Phi} {\partial x_i})\|_{L_2(0,T)} $$  $$ +\delta\|(\sigma
W-W\sigma, \Phi)\|_{L_2(0,T)}  $$ $$+ \varepsilon\| (\sigma, 
\Phi)_2\|_{L_2(0,T)} +2\delta\mu
   \|(\nabla u, \Phi)\|_{L_2(0,T)}.\ee 

Integration by parts implies   \be\sum\limits^n _ {i=1}\left(u_i \Delta u, \frac {\partial\varphi} {\partial x_i}\right)$$ $$=-\sum\limits^n _ {i,j=1}\left(\frac {\partial u_i}{\partial x_j} \frac {\partial u} {\partial x_j}, \frac {\partial\varphi} {\partial x_i}\right)-\sum\limits^n _ {i,j=1}\left(u_i \frac {\partial u} {\partial x_j}, \frac {\partial^2\varphi} {\partial x_i \partial x_j}\right),\ee
and \be\sum\limits _ {i=1} ^ {n}
   (\Delta u_i \nabla u_i, \varphi)=$$ $$-\sum\limits _ {i,j,k=1} ^ {n}
   \left( \frac {\partial u_i} {\partial x_j}\frac{\partial u_i}{\partial x_k}, \frac {\partial \varphi_k}{\partial x_j}\right) -\sum\limits _ {i,j,k=1} ^ {n} \left(\varphi_k \frac {\partial u_i} {\partial x_j}, \frac{\partial^2 u_i}{\partial x_j \partial x_k}\right)\ee
   The second term is zero due to \eqref{equal1}. 

Using H\"{o}lder's and Sobolev imbedding inequalities, we estimate \eqref{dert} as follows: 
   
   $$\|\langle u', \varphi \rangle\|_{L_2(0,T)} \leq \delta\|u\|^2_{L_4(0,T;L_2)}\|\nabla\varphi\|_{L_\infty}$$ $$+ \delta\|u\|_{L_\infty(0,T;L_2)}\|u\|_{L_2(0,T;V)}\|\nabla\varphi\|_{L_\infty}+2\delta\alpha^2 \|u\|^2_{L_4(0,T;V)}\|\nabla\varphi\|_{L_\infty}$$ $$+\delta\alpha^2 \|u\|_{L_\infty(0,T;L_4)}\|u\|_{L_2(0,T;V)}\|\nabla^2\varphi\|_{L_4}$$ $$+\varepsilon \|u\|_{L_2(0,T;V_3)}\|\varphi\|_{3}+\delta\|\sigma\|_{L_2(0,T;L_2)}\|\nabla\varphi\|_{L_2}$$ $$ \leq C\left(\|u\|^2_{L_\infty(0,T;V)}\|\varphi\|_{3}+\varepsilon \|u\|_{L_2(0,T;V_3)}\|\varphi\|_{3}+\|\sigma\|_{L_\infty(0,T;L_2)}\|\varphi\|_{3}\right)$$
   Now, \eqref{ae1} and \eqref{ae2} yield  \be\|\langle u', \varphi \rangle\|_{L_2(0,T)} \leq C(1+\sqrt{\varepsilon})\|\varphi\|_{3},\ee which is equivalent to the required bound for $u'$. 
   
 Similarly, from \eqref{derts} we derive the estimate for the time derivative of $\sigma$: \be\|\langle \sigma', \Phi \rangle\|_{L_2(0,T)}\leq C[\|\sigma\|_{L_2(0,T;L_2)}\|\Phi\| $$ $$ +\|u\|_{L_\infty(0,T;L_4)}\|\sigma\|_{L_2(0,T;L_2)}\|\nabla\Phi\|_{L_4}+\|\sigma\|_{L_2(0,T;L_2)}\|u\|_{L_\infty(0,T;V)}\|\Phi\|_{L_\infty}$$ $$+\varepsilon \|\sigma\|_{L_2(0,T;H^2)}\|\Phi\|_{2}+ \|u\|_{L_2(0,T;V)}\|\Phi\|]$$ $$\leq C[\|\sigma\|_{L_\infty(0,T;L_2)}\|\Phi\|_2 +\|u\|_{L_\infty(0,T;V)}\|\sigma\|_{L_\infty(0,T;L_2)}\|\Phi\|_{2}$$ $$+\varepsilon \|\sigma\|_{L_2(0,T;H^2)}\|\Phi\|_{2}+ \|u\|_{L_\infty(0,T;V)}\|\Phi\|_2]$$ $$\leq  C(1+\sqrt{\varepsilon})\|\Phi\|_{2}.\ee
 \end{proof}

\begin{lemma} \label{lews} Given $T>0$, a bounded domain $\Omega$ having the cone property, and data $a\in V,$ $\sigma_0\in L_2,$ there exists a weak solution to problem \eqref{au1} -- \eqref{au6} with $\delta=1$. \end{lemma}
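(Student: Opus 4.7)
The plan is to apply Schaeffer's fixed-point theorem on the Banach space $\mathcal{X} := L_2(0,T;V) \times L_2(0,T;L_2)$, with the parameter $\delta$ in \eqref{au1}--\eqref{au6} playing the role of Schaeffer's homotopy parameter. Since $\varepsilon > 0$ is fixed throughout, the regularizing terms $\varepsilon A_3 u$ and $\varepsilon A_2 \sigma$ will provide the required compactness.

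I would introduce a solution operator $A \colon \mathcal{X} \to \mathcal{X}$ as follows. Given $(\tilde u, \tilde\sigma) \in \mathcal{X}$, let $A(\tilde u, \tilde\sigma) := (u, \sigma)$ be the unique solution of the decoupled pair of linear abstract parabolic equations obtained from the weak formulation \eqref{w1}--\eqref{w2} with $\delta = 1$ by replacing every occurrence of $u, \sigma$ on the right-hand sides (and in the transport and coupling terms) by $\tilde u, \tilde\sigma$, while keeping $(u,\sigma)$ in the linear parts $\frac{d}{dt}(u,\varphi)_V + \varepsilon(u,\varphi)_3$ and $\frac{d}{dt}(\sigma,\Phi) + \varepsilon(\sigma,\Phi)_2$, and imposing initial data $u(0) = a$, $\sigma(0) = \sigma_0$. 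Existence and uniqueness of $(u,\sigma) \in W_1 \times W_2$ follows from the Lions theory of abstract linear parabolic equations (or a direct Galerkin argument on eigenfunctions of $A_3$ and $A_2$ with energy estimates), once one verifies that the right-hand sides lie in $L_2(0,T; V_3^*)$ and $L_2(0,T; H^{-2}_N)$ respectively via \eqref{inq1}, \eqref{inq2} and Sobolev embeddings. A rescaling computation -- $(u,\sigma) = \delta A(u,\sigma)$ iff $(u/\delta, \sigma/\delta) = A(u,\sigma)$, and reinserting this relation recovers exactly the system \eqref{au1}--\eqref{au6} at parameter $\delta$ -- shows that fixed points of $\delta A$ are precisely the weak solutions of \eqref{au1}--\eqref{au6} at parameter $\delta$. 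In particular, fixed points of $A$ itself are the desired weak solutions at $\delta = 1$.

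Next, I would establish continuity and compactness of $A$ on $\mathcal{X}$. By construction, $A$ sends bounded sets of $\mathcal{X}$ into bounded sets of $W_1 \times W_2$, and the Aubin--Lions lemma combined with the Rellich--Kondrachov compact embeddings $V_3 \hookrightarrow V$ and $H^2 \hookrightarrow L_2$ (valid since $\Omega$ is bounded and possesses the cone property) yields the compact inclusion $W_1 \times W_2 \hookrightarrow \mathcal{X}$, whence $A$ is compact. The Schaeffer boundedness hypothesis -- that $\{(u,\sigma) \in \mathcal{X} : (u,\sigma) = \delta A(u,\sigma) \text{ for some } \delta \in [0,1]\}$ is bounded -- follows immediately from the $\delta$-independent estimate \eqref{ae1} in Lemma \ref{leae}. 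Schaeffer's theorem then supplies a fixed point of $A$, which is the sought weak solution at $\delta = 1$.

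The main technical hurdle is verifying that $A$ is well-defined and continuous, which reduces to bounding the most singular source terms (notably $\sum_i \tilde u_i \partial_i(\Delta_\alpha \tilde u)$ and $\sum_i (\Delta_\alpha \tilde u)_i \nabla \tilde u_i$, which carry derivatives of $\tilde u$ up to order three through $\Delta_\alpha$) as continuous bilinear maps from $\mathcal{X}$ into $L_2(0,T; V_3^*)$. This is accomplished by integration by parts against $V_3$-test functions together with \eqref{inq1}, \eqref{inq2} -- precisely in the spirit of the estimates \eqref{dert}--\eqref{derts} already carried out in the proof of Lemma \ref{leae}.
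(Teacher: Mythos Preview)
Your overall strategy --- Schaeffer's theorem with $\delta$ as homotopy parameter, the a~priori bounds from Lemma~\ref{leae}, and Aubin--Lions/Simon compactness --- is exactly the paper's. The paper works directly on $W_1\times W_2$: it writes the problem as $\tilde A(u,\sigma)=\delta Q(u,\sigma)$ with $\tilde A$ linear and invertible, and shows that $Q:W_1\times W_2\to L_2(0,T;V_3^*)\times L_2(0,T;H^{-2}_N)\times V\times L_2$ is compact by factoring the bilinear pieces through the compact embeddings $W_1\hookrightarrow L_p(0,T;V)$ ($p<\infty$) and $W_1\hookrightarrow L_q(0,T;V_2)$ ($q<4$) supplied by Simon's theorem.

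There is, however, a genuine gap in your choice $\mathcal X=L_2(0,T;V)\times L_2(0,T;L_2)$. The quadratic source terms are only $L_1$ in time, not $L_2$, when the inputs are merely $L_2$ in time. After the integration by parts you describe one obtains pointwise bounds of the form
\[
\|N_1(\tilde u)(t)\|_{V_3^*}\le C\|\tilde u(t)\|_1^2,\qquad
\|N_3(\tilde u,\tilde\sigma)(t)\|_{H^{-2}_N}\le C\|\tilde u(t)\|_1\|\tilde\sigma(t)\|,
\]
which yield $\|N_1(\tilde u)\|_{L_2(0,T;V_3^*)}\le C\|\tilde u\|_{L_4(0,T;V)}^2$ and similarly for the other terms; none of these is controlled by the $L_2(0,T;V)\times L_2(0,T;L_2)$ norm alone. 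Hence $A$ is not well-defined on your $\mathcal X$, and the claim that $A$ sends bounded sets of $\mathcal X$ into bounded sets of $W_1\times W_2$ fails. The repair is easy: take $\mathcal X=L_4(0,T;V)\times L_4(0,T;L_2)$ (Simon's theorem still gives the compact inclusion $W_1\times W_2\hookrightarrow\mathcal X$, and \eqref{ae1} provides an even stronger $L_\infty$-in-time a~priori bound for Schaeffer), or, as the paper does, take the fixed-point space to be $W_1\times W_2$ itself.
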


\begin{proof}  Observe that Lemma \ref{leae} yields the a priori estimate \be \label{aes}
\|u\|_{W_1}+ \|\sigma\|_{W_2}\leq C,\ee  where $C$ may depend on $\varepsilon$ but does not depend on $\delta$.

Let us rewrite the weak statement of \eqref{au1} -- \eqref{au6} in the suitable operator form \begin{equation} \label{opeq} \tilde {A} (u, \sigma) = \delta Q (u, \sigma). \end{equation} For this purpose, we introduce the following operators. Here $\varphi\in V_3$ and $\Phi\in H^2$ are test functions. 

$$N_1: W_1\to L_2 (0, T; V^{*}_3), $$ $$ \langle N_1 (u), \varphi \rangle =
 \sum\limits^n _ {i=1} (u_i \Delta_\alpha u, \frac {\partial\varphi} {\partial x_i}),$$
 
 $$N_2: W_1\to L_2 (0, T; V^{*}_3), $$ $$\langle N_2 (u), \varphi \rangle=-
 \sum\limits _ {i=1} ^ {n}
   (\Delta_\alpha u_i \nabla u_i, \varphi),$$
   
   $$N_3: W_1\times W_2\to L_2 (0, T; H^{-2}_N), $$ $$\langle N_3 (u,\sigma), \Phi \rangle =\sum\limits^n _ {i=1} (u_i\sigma, \frac {\partial\Phi} {\partial x_i}) ,$$

 $$N_4: W_1\times W_2\to L_2 (0, T; H^{-2}_N), $$ $$\langle N_4 (u,\sigma), \Phi \rangle =(W\sigma-\sigma
W, \Phi),$$

$$S_1: W_2\to L_2 (0, T; V^{*}_3),\ \langle S_1 \sigma, \varphi \rangle =
- (\sigma, \nabla \varphi) ,$$
 
$$S_2: W_1\to L_2 (0, T; H^{-2}_N),\ \langle S_2 u, \Phi \rangle =2\mu
   (\nabla u, \Phi),$$
   
$$S_3: W_2\to L_2 (0, T; H^{-2}_N),\ S_3 \sigma =-
\frac \sigma \lambda,$$

 $$Q:W_1\times W_2\to L_2 (0, T; V^{*}_3)\times L_2 (0, T; H^{-2}_N)
\times V\times L_2, $$

$$Q(u,\sigma)$$ $$=(N_1(u)+N_2(u)+S_1 \sigma, N_3(u,\sigma)+N_4(u,\sigma)+S_2 u +S_3 \sigma, a, \sigma_0),$$

$$\tilde{A}:W_1\times W_2\to L_2 (0, T; V^{*}_3)\times L_2 (0, T; H^{-2}_N)
\times V\times L_2,$$ $$
\tilde{A} (u,\sigma)= (u'+ \varepsilon A_3 u, \sigma' +\varepsilon A_2 \sigma, u | _ {t=0}, \sigma | _ {t=0}).
$$

We have treated the time derivative terms with the help of \eqref{td1} and \eqref{td2}.

Let us check that the operator $Q$ is compact (and, hence, continuous, since it consists of linear, bilinear and quadratic components). It suffices to show compactness of $S_1$, $S_2$, $S_3$, $N_1$, $N_2$, $N_3$ and $N_4$. 

Since the embeddings $V_3\subset V$ and $H^2\subset L_2$ are compact, the embeddings $W_1\subset L_2 (0, T; V), $ $W_2\subset
L_2 (0, T; L_2) $ are also compact by the Aubin-Simon theorem (see e.g. \cite{sim,book} or 
\cite[Theorem III.2.1]{temam}). Thus, $S_1$, $S_2$ and $S_3$ are compact as superpositions of compact embeddings with 
bounded operators. 

Moreover, by \cite[Corollary 8]{sim}, the embeddings $$W_1\subset L_p (0, T; V), W_2\subset
L_p (0, T; L_2),$$ and $$W_1\subset L_q (0, T; V_2)$$ are compact for any $p<\infty$ and $q<4$. 

It remains to establish boundedness (and, therefore, continuity) of the bilinear operators 

  $$N_3: L_4 (0, T; V) \times L_4 (0, T; L_2) \to L_2 (0, T; H^{-2}_N), $$

 $$N_4: L_4 (0, T; V) \times L_4 (0, T; L_2) \to L_2 (0, T; H^{-2}_N), $$ 

$$\tilde{N}_1: L_6 (0, T; V) \times L_3 (0, T; V_2) \to L_2 (0, T; V^{*}_3), $$ 

$$ \langle \tilde{N}_1 (u,w), \varphi \rangle =
 \sum\limits^n _ {i=1} (u_i \Delta_\alpha w, \frac {\partial\varphi} {\partial x_i}),$$

 $$\tilde{N}_2: L_3 (0, T; V_2) \times L_6 (0, T; V)  \to L_2 (0, T; V^{*}_3), $$ 
 
 $$\langle \tilde{N}_2 (u,w), \varphi \rangle=-
 \sum\limits _ {i=1} ^ {n}
   (\Delta_\alpha u_i \nabla w_i, \varphi).$$
   This is straightforward, e.g. for $\tilde{N}_2$ one has 
 
 $$\|\langle \tilde{N}_2 (u,w), \varphi \rangle\|_{L_2(0,T)}\leq C\|\Delta_\alpha u\|_{L_3 (0, T; L_2)} \|\nabla w\|_{L_6 (0, T; L_2)}\|\varphi\|_{L_\infty}$$ 
 $$\leq C\|u\|_{L_3 (0, T; V_2)} \|w\|_{L_6 (0, T; V)}\|\varphi\|_{3}. $$
 
 The linear operator $\tilde{A}$ is invertible by \cite[Lemma 3.1.3]{book}. Thus, \eqref{opeq} can be rewritten as \begin{equation} \label{opeq} (u, \sigma) = \delta \tilde{A}^{-1}Q (u, \sigma) \end{equation}
 in the space $W_1\times W_2$.
 
By virtue of Schaeffer's theorem \cite[p. 539]{evans},  the a priori estimate \eqref{aes}, uniform in $\delta$, guarantees  existence of a fixed point of the map $\tilde{A}^{-1}Q $, which is the required solution.  
\end{proof}

We are now in a position to prove the main result.

\begin{proof} (Theorem \ref{mai}) Take an increasing sequence of
positive numbers $T_m\to\infty $ and a decreasing sequence of
positive numbers $\varepsilon_m\to 0$. By Lemma \ref{lews}, there
is a pair $ (u_m, \sigma_m) $ which is a weak solution to problem
\eqref{au1} -- \eqref{au6} with $\delta=1$, $T=T_m$,
$\varepsilon=\varepsilon_m$.
 Denote by $ \widetilde {u} _m $
and $ \widetilde {\sigma} _m $ the functions which are equal to $u_m
$ and $ \sigma_m $ in $ [0, T_m] $ and are equal to zero on $ (T_m,
+\infty) $.

Lemma \ref{diin} implies that, for all $\zeta\in C^1([0,\infty); V_3),$ $\theta \in C^1([0,\infty); H^2)$ and $0\leq t\leq
T\leq T_m$, one has \be \label{in330} 2\mu\|u_m(t)-\zeta(t)\|^2_{V}+
\|\sigma_m(t)-\theta(t)\|^2 $$ $$\leq
\exp\left(\int\limits_0^t \Gamma(s) ds\right)\Big\{ 
2\mu\| a-\zeta(0)\|^2_{V}+\|\sigma_0-\theta(0)\|^2 $$ $$+
\int\limits_0^t \exp\left(\int\limits_s^0 \Gamma(\xi) d\xi\right)
\big[4\mu\big( E_{1}(\zeta,\theta)(s),u_m(s)-\zeta(s)\big)$$
$$+ 2 \big( E_{2}(\zeta,\theta)(s), \sigma_m(s)-\theta(s)\big)-4\mu\varepsilon_m (\zeta(s),u_m(s)-\zeta(s))_3 $$
$$ -2\varepsilon_m(\theta(s), \sigma_m(s)-\theta(s))_2 \big]\, ds \Big\}.
\ee 

Fix an arbitrary interval $ [0, T] $. Due to a priori estimate
\eqref{ae1}, without loss of generality (passing to a subsequence if
necessary) one may assume that there exist
limits
 $u =\lim\limits _ {m\to
\infty} ^ {} \widetilde {u} _ {m} $, which is weak-* in
$L _ {\infty} (0, \infty; V) $  and weak in $L _2(0, T; V) $, and
$ \sigma =\lim\limits _ {m\to \infty} ^ {} \widetilde {\sigma} _ {m}
$, which is weak-* in $L _ {\infty} (0, \infty; L_2) $ and weak in $L _2(0, T; L_2) $.

Moreover, by \eqref{ae3}, without loss of generality one may assume
that $\widetilde {u}_m '\to u '$ in $L_2(0,T;V_3^*)$, $\widetilde {\sigma}_m ' \to \sigma '
$ in $L_2(0,T;H^{-2}_N)$. This gives that $u\in C([0,T];
V_3^*)$, $\sigma\in C([0,T];
H^{-2}_N)$, and, by a well-known Lions-Magenes lemma, see e.g. \cite[Lemma 2.2.6]{book}, $u\in C_w([0,T];
V),$ $\sigma\in C_w([0,T]; L_2)$.

Take the scalar product in $L_2 (0, T) $ of inequality \eqref{in330} with
a smooth scalar function $ \psi$ with compact support in $(0,T) $
and with non-negative values, and use \eqref{ae2} and the Cauchy-Bunyakovsky-Schwarz inequality:

\be \label{in331} \int \limits_0^T \Big\{ 2\mu\|u_m(t)-\zeta(t)\|^2_{V}+
\|\sigma_m(t)-\theta(t)\|^2\Big\}\psi(t)\,dt $$ $$\leq \int
\limits_0^T
\exp\left(\int\limits_0^t \Gamma(s) ds\right)\Big\{ 
2\mu\| a-\zeta(0)\|^2_{V}+\|\sigma_0-\theta(0)\|^2 $$ $$+
\int\limits_0^t \exp\left(\int\limits_s^0 \Gamma(\xi) d\xi\right)
\big[4\mu\big( E_{1}(\zeta,\theta)(s),u_m(s)-\zeta(s)\big)$$
$$+ 2 \big( E_{2}(\zeta,\theta)(s), \sigma_m(s)-\theta(s)\big)\big]\, ds + C(\sqrt{\varepsilon_m}+\varepsilon_m) \Big\}\psi(t)\,dt.
\ee

Passing to the limit inferior as $m\to\infty $ in \eqref{in331}, and
using the fact that the norm of a weak limit of a sequence does
not exceed the limit inferior of the norms, we arrive at: \be \label{in332} \int \limits_0^T \Big\{ 2\mu\|u(t)-\zeta(t)\|^2_{V}+
\|\sigma(t)-\theta(t)\|^2\Big\}\psi(t)\,dt $$ $$\leq \int
\limits_0^T
\exp\left(\int\limits_0^t \Gamma(s) ds\right)\Big\{ 
2\mu\| a-\zeta(0)\|^2_{V}+\|\sigma_0-\theta(0)\|^2 $$ $$+
\int\limits_0^t \exp\left(\int\limits_s^0 \Gamma(\xi) d\xi\right)
\big[4\mu\big( E_{1}(\zeta,\theta)(s),u(s)-\zeta(s)\big)$$
$$+ 2 \big( E_{2}(\zeta,\theta)(s), \sigma(s)-\theta(s)\big)\big]\, ds  \Big\}\psi(t)\,dt.
\ee  Since $ \psi $ and $T$ were chosen arbitrarily, \eqref{in332} yields
\eqref{ds}, and we have proven the existence of a dissipative solution. Note that \eqref{ds} holds at every non-negative moment of time owing to the weak continuity of $u$ and $\sigma$.

Now, let $(u,\sigma)$ be a
dissipative solution with the same initial data as the
strong solution $(u_T,\sigma_T)$. Putting $\zeta=u_T$,
$\theta=\sigma_T$ in \eqref{ds} for $t\in[0,T]$, and taking into account
that $E_1(u_T,\sigma_T)\equiv E_2(u_T,\sigma_T) \equiv 0$ on
$[0,T]$, we get that the right-hand side of \eqref{ds} vanishes there,
and we arrive at the claim b) of Theorem \ref{mai}. Observe that c) is a direct consequence of a)
and b): any strong solution should coincide with all
dissipative solutions as long as it exists (cf. \cite{diss1}). \end{proof}

\begin{remark} Remember that, in Lemmas \ref{diin} and \ref{leae}, $\Omega$ can be an unbounded domain with the cone property. Moreover, the constants in \eqref{ae1} and \eqref{ae2} do not depend on $\Omega$. Then, in order to establish existence of a dissipative solution to \eqref{eq1} -- \eqref{eq6}, one can try to approximate $\Omega$ with regular bounded domains (cf. e.g. \cite{vdec}) and pass to the limit, without generalizing Lemma \ref{lews} onto the case of unbounded ones. However, some pitfalls appear, in particular, we did not manage to prove the weak continuity of the limiting function. So we leave the general ``unbounded" case as an \emph{open problem}. In the particular case $\Omega=\R^n$, Lemma \ref{lews} and Theorem \ref{mai} are valid since one can replace the operators $A_3$ and $A_2$ and the spaces $V_3$ and $H^2$ with $(I-\Delta)^3$, $(I-\Delta)^2$, the closure of $\mathcal{V}$ in $H^3$, and $H^2_0$, respectively, and approximate $\Omega$ with an ascending sequence of concentric balls (see a similar reasoning in \cite{diss1}).  
\end{remark}

\begin{remark} \label{maxw} Assume again $a\in V,$ $\sigma_0\in L_2$. Take a sequence of positive numbers $\alpha_m \to 0$. The dissipative bound \eqref{din1} guarantees compactness of any sequence $\{(u_m,\sigma_m)\}$ of dissipative solutions to the Maxwell-$\alpha_m$ problem in the weak-* topology of $L_\infty(0,\infty; H\times L_2)$. The accumulation points of $\{(u_m,\sigma_m)\}$ can be considered as ``ultra-generalized" solutions to the IBVP for motion of the Maxwell fluid, i.e. the problem \eqref{eq1} -- \eqref{eq6} with $\alpha=0$. However, the relevance of this notion is an open question.  \end{remark}

\begin{remark} A related open problem is whether our dissipative solutions to the Euler-$\alpha$ problem converge in some sense to the dissipative solutions to the Euler problem as $\alpha\to 0$. The convergence is known for the strong solutions on the time interval where they exist \cite{shiz}. \end{remark}

\begin{remark} \label{wild} \emph{Wild} solutions \cite{lel} to the Euler equations are an important example of dissipative solutions. These solutions are known to exist only for rough initial data and to be non-unique. A slightly different sort of wild solutions, which are not dissipative solutions, but exist for all initial data, was considered in \cite{wied1}. It is interesting whether the $\alpha$-models admit wild solutions. \end{remark}
 
 \section{Appendix. Dissipative solutions to Cauchy problems in Hilbert spaces}
 
 In this appendix we illustrate the idea of dissipative solution by considering (a little bit informally) 
 this notion for an abstract differential equation in a Hilbert space. We point out that this is not a universal definition of dissipative solution but just a guideline, which shows the essence of this concept. Slight natural modifications of this approach (if necessary) may lead to definitions of dissipative solutions for particular PDEs, e.g. the corresponding definitions from \cite{blions,diss1} or Definition \ref{maindef} of the current paper. 
 
 Let $X$ be a Hilbert space with inner product $(\cdot,\cdot)$ and Euclidean norm $\|\cdot\|$, and $F:[0,+\infty)\times X\to X$ be a nonlinear operator, which can be discontinuous. Assume that $F$ satisfies the following condition: \be\label{cond}(F(t,x)-F(t,y),x-y)\leq d(t,y)\|x-y\|^2,\ \forall x,y\in X, t\geq 0,\ee with some locally bounded function $d(t,y)$ of  $t\geq 0$ and $y\in X$. 
 
 \begin{remark} \label{condf} For example, \eqref{cond} holds provided  \be(F(t,x),x)\leq 0,\ \forall x\in X,\ t\geq 0,\ee and the operator $F$ can be decomposed into a sum of a linear (in $x$) operator $A$ and a quadratic (with respect to $x$ for fixed $t$) operator subject to a slight boundedness assumption. Indeed, let $$F(t,x)=A(t,x)+f(t,x,x)$$ be such a decomposition, where $f$  is a bilinear operator with respect to $(x,y)$ satisfying the bound $$\|f(t,x,y)\|\leq c(t)\|x\|\|y\|$$ with a locally bounded function $c$. Take any $x,y\in X$, and let $z=x-y$. Then $$(F(t,x)-F(t,y),z)=(F(t,z),z)+(f(t,y,z),z)+(f(t,z,y),z)$$ $$\leq 2 c(t)\|y\|\|z\|^2.$$ \end{remark}
 
 We consider the abstract Cauchy problem \be\label{ode} u'(t)=F(t,u(t)), \ t>0, \ee \be\label{iq} u(0)=a\in X.\ee
Let $\mathcal{R}$ be some fixed set of differentiable functions from $[0,+\infty)$ to $X$. The solutions $u\in \mathcal{R}$ to \eqref{ode},\eqref{iq} will be called \emph{(sufficiently) regular}. More generally, a solution $u:[0,T]\to X$, $T>0$, is said to be regular if it coincides with the restriction of a function from $\mathcal{R}$ to this interval  $[0,T]$. We assume that $\mathcal{R}$ is sufficiently large, so that the set $\{u(0)|u\in \mathcal{R}\}$ is dense in $X$. 

Take a solution $u\in\mathcal{R}$ and any test function $v\in\mathcal{R}$, and denote $$E(t,v(t))=-v'(t)+F(t,v(t)).$$ Then  \eqref{ode} implies  \be\label{ode1}(u-v)'=F(t,u)-F(t,v)+E(t,v).\ee Hence,  \be\label{ode2} (\|u-v\|^2)'=2(u'-v',u-v)=2(F(t,u)-F(t,v),u-v)+2(E(t,v),u-v)$$ $$\leq 2 d(t,v(t))\|u-v\|^2+2 (E(t,v),u-v).\ee 
 
An application of Lemma \ref{ineq} yields

\be \label{di} \|u(t)-v(t)\|^2 \leq \exp\left(\int\limits_0^t 2 d(s,v(s)) ds\right)$$ $$\times\left[\|a-v(0)\|^2+2
\int\limits_0^t \exp\left(\int\limits_s^0 2 d(\xi,v(\xi)) d\xi\right) (E(s,v(s)),u(s)-v(s))\, ds
\right].\ee

\begin{defn} \label{dsol} A weakly continuous
function $u:[0,+\infty)\to X$ is called a {\it dissipative} solution
to problem \eqref{ode},\eqref{iq} provided \eqref{di} holds for all $v \in \mathcal{R}$ and $t\geq 0$. \end{defn}

The basic properties of these solutions are given by

\begin{prop} \label{prop41}
a) If, for some $a\in X$, there exist $T>0$
and a regular solution $u_T: [0,T]\to X$ to problem \eqref{ode},\eqref{iq}, then the restriction
of any dissipative solution (with the same initial data) to
$[0,T]$ coincides with $u_T$. b) Every regular solution $u\in \mathcal{R}$ is a (unique) dissipative solution. c) Any dissipative solution satisfies the initial condition \eqref{iq}.
\end{prop}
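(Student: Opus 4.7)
The plan is to exploit three features of the definition: first, inequality \eqref{di} was itself derived under the hypothesis $u \in \mathcal{R}$, so regular solutions automatically satisfy it; second, feeding a regular solution back into the right-hand side of \eqref{di} as the test function $v$ collapses that side to zero; third, density of $\{v(0) : v \in \mathcal{R}\}$ in $X$ lets us recover the initial value from the inequality alone.

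For part (a), let $u$ be a dissipative solution for the datum $a$ and $u_T$ a regular solution on $[0,T]$ with the same datum. By definition of regularity, $u_T$ extends to some $\tilde u_T \in \mathcal{R}$ with $\tilde u_T(0) = a$. Taking $v = \tilde u_T$ in \eqref{di}, we have
\[ E(s,\tilde u_T(s)) = -\tilde u_T'(s) + F(s,\tilde u_T(s)) = 0 \quad \text{for } s \in [0,T], \]
since $\tilde u_T$ satisfies \eqref{ode} on $[0,T]$, and moreover $\|a - \tilde u_T(0)\| = 0$. Thus the right-hand side of \eqref{di} vanishes on $[0,T]$, forcing $u \equiv u_T$ there.

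Part (b) is essentially already contained in the text: the calculation leading up to Definition \ref{dsol} shows that every $u \in \mathcal{R}$ satisfying \eqref{ode}--\eqref{iq} verifies \eqref{di} against every test function $v \in \mathcal{R}$, and such $u$ is differentiable and hence weakly continuous. Uniqueness among dissipative solutions is then immediate from (a) applied with $u_T$ equal to the restriction of $u$ to each finite interval $[0,T]$.

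Part (c) is the one place where the density assumption is essential, and is the only step that requires extra care. Setting $t = 0$ in \eqref{di} kills the integral term and yields
\[ \|u(0) - v(0)\|^2 \leq \|a - v(0)\|^2 \quad \text{for all } v \in \mathcal{R}. \]
Choosing a sequence $v_n \in \mathcal{R}$ with $v_n(0) \to a$ (possible by density), we get $\|u(0) - v_n(0)\| \to 0$, so the triangle inequality yields $u(0) = a$. Weak continuity of $u$ is precisely what makes $u(0)$ a well-defined element of $X$, so the argument is internally consistent.
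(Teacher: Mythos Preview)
Your proof is correct and follows essentially the same approach as the paper: the same extension $\tilde u_T\in\mathcal{R}$ for (a), the same appeal to the derivation of \eqref{di} for (b) with uniqueness following from (a), and the same $t=0$ density argument for (c). The only cosmetic difference is in (c), where the paper first extends the inequality $\|u(0)-v(0)\|\leq\|a-v(0)\|$ by density to all $b\in X$ and then sets $b=a$, whereas you take a sequence $v_n(0)\to a$ directly; these are equivalent.
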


\begin{proof} Let $u$ be a
dissipative solution with the same initial data as $u_T$. There exists $\tilde{u}_T\in \mathcal{R}$ coinciding with $u_T$ on $[0,T]$. Putting $v=\tilde{u}_T$
in \eqref{di} for $t\in[0,T]$, and taking into account
that $E(t,\tilde{u}_T(t))\equiv 0$ on
$[0,T]$, we conclude that the right-hand side of \eqref{di} vanishes there. Thus we have a). The reasoning above which led us to the derivation of inequality \eqref{di} gives us b); note that the uniqueness in b) follows from a). To show c), we put $t=0$ in \eqref{di} and get \be\|u(0)-v(0)\| \leq \|a-v(0)\|.\ee Since the set of possible $v(0)$ is dense in $X$, we have \be\label{di0}\|u(0)-b\| \leq \|a-b\|\ \forall b\in X,\ee and it remains to let $b=a$. \end{proof} 

\begin{remark} The uniqueness of dissipative solutions is not given a priori but follows from the existence of a regular solution. On the other hand, when a regular solution does not exist, non-uniqueness is possible (see Remark \ref{wild}).  
\end{remark}

\begin{remark} Consider the ``quasihomogeneous" case, i.e. when \be F(\cdot,0)\equiv 0.\ee If
\be d(\cdot,0)\equiv 0,\ee then \eqref{di} with $v\equiv 0$ yields dissipative behaviour of solutions,  
\be \label{dib} \|u(t)\| \leq \|u(0)\|,\ t> 0.\ee All this holds true, in particular, in the framework of Remark \ref{condf}. A similar situation happens for the homogeneous 3D Euler equation \cite{tit}. 
However, Definition \ref{dsol} may be useful without assumptions of this kind (cf. \cite{diss1}). 
\end{remark}

A remarkable thing is that these dissipative solutions \emph{exist} under minimal assumptions on $F$. We are not going to formulate an existence theorem, but we'll try to explain why they exist. 

The first point is an a priori estimate. The Cauchy-Bunyakovsky-Schwarz inequality and \eqref{ode2} with $v\equiv 0$ yield
\be\label{ode3} (\|u\|^2)' \leq 2 \left(d(t,0)+\frac 14\right)\|u\|^2+2\|F(t,0)\|^2,\ee 
so, by Lemma \ref{ineq}, we have the following bound:

\be \label{apr} \|u(t)\|^2 \leq \exp\left(\int\limits_0^t 2 \left(d(s,0) +\frac 14\right)\, ds\right)$$ $$\times\left[\|a\|^2+2
\int\limits_0^t \exp\left(\int\limits_s^0 2 \left(d(\xi,0) +\frac 14\right) d\xi\right) \|F(s,0)\|^2\, ds
\right].\ee

Now approximate $F$ by smooth functions $F_\epsilon$, $\epsilon > 0$, $F_\epsilon(\cdot,v(\cdot)) \to F(\cdot,v(\cdot)) $ in $L_{1,loc}(0,\infty)$ for any fixed $v\in \mathcal{R}$ as $\epsilon \to 0$. The functions $F_\epsilon$ satisfy \eqref{cond} with some $d_\epsilon$ (since they are smooth), and thus the solutions $u_\epsilon$ of the corresponding systems \eqref{ode},\eqref{iq} are a priori bounded by analogues of \eqref{apr}. The infinite-dimensional Picard-Lindel\"{o}f theorem (see e.g. \cite{reiss,shk}) implies that the  solutions $u_\epsilon$ exist (globally) and are unique. 
Assume that there exists a function $\tilde{d}(t,y)$ for which  \eqref{cond} is valid for any $\epsilon >0$.  Without loss of generality, $d\equiv \tilde{d}$ (if not, one can replace both of them by $\max[d,\tilde{d}]$ ). If the functions $F_\epsilon(\cdot,0) $ are locally square integrable in time, and the corresponding integrals are uniformly (i.e. independently of $\epsilon$) bounded, then the uniform a priori estimate \eqref{apr} holds for $u_\epsilon$. Thus, any sequence $\{u_{\epsilon_k}\}$, $\epsilon_k \to 0$, is relatively compact in the weak-* topology of $L_\infty(0,T;X)$ for any $T>0$, so without loss of generality there exists a limit $u$. A diagonal argument guarantees that the sequence and the limit can be chosen so that they do not depend on $T$.

Suppose that there are a ``large" Banach space $Y$ (so that $X$ is continuously embedded into it), 
and, say, a continuous function $c:\R^2 \to \R$ such that $$\|F_\epsilon(t,x)\|_Y \leq c(t,\|x\|_X),\ x\in X, \ t\geq 0.$$
This assumption implies a uniform bound for the time derivatives $$\|u'_\epsilon\|_{L_{\infty}(0,T;Y)}\leq C.$$
Thus, $u' \in L_{\infty}(0,T;Y)$, and, by \cite[Lemma 2.2.6]{book}, $u \in C_w([0,T];X)$. 

Now, remember that $u_{\epsilon_k}$ are, in particular, dissipative solutions. Multiplying by a smooth nonnegative function $\psi(t)$ and integrating from $0$ to $T$, we can pass to the limit in \eqref{di} in the same way as it was done in the proof of Theorem \ref{mai}, and we conclude that $u$ is a dissipative solution to the original system \eqref{ode},\eqref{iq}.

\begin{remark} Let $X$ be finite-dimensional. Then one can study system \eqref{ode},\eqref{iq} via Filippov's approach \cite{fil1,fil2}. Condition \eqref{cond} locally implies the so-called right Lipschitz condition  $$(F(t,x)-F(t,y),x-y)\leq C\|x-y\|^2,$$ which guarantees uniqueness of Filippov solutions. On the other hand, it is not clear whether the Filippov solution is always a dissipative one, or vice versa (or at least whether dissipative solutions are unique in this case). 
% However, there exist dissipative solutions which are not Filippov ones. 
%In fact, let $X=\R^2$, and $F(t,x_1,x_2)=(4+2\, \sgn\, x_2, 2-4\,\sgn\, x_2)$ (cf. \cite{fil1}).  Then the Filippov solution 
\end{remark}

\begin{remark} The model considered recently in \cite{v14} does not satisfy any condition similar to \eqref{cond}, but we found an analogue of \eqref{di}. However, that inequality has an absolute value in the last integral, so the weak passage to the limit is not an option. We proceed there via some strong convergence, which is still not strong enough to get classical weak solutions.\end{remark}
 
\bibliography{maxarx1}

\bibliographystyle{abbrv}
\end{document}